\def\Hom{\mathop{\rm Hom}\nolimits}
\def\Ext{\mathop{\rm Ext}\nolimits}
\def\id{\mathop{\rm id}\nolimits}
\def\mod{\mathop{\rm mod}\nolimits}
\def\Mod{\mathop{\rm Mod}\nolimits}
\def\Add{\mathop{\rm Add}\nolimits}
\def\rep{\mathop{\rm rep}\nolimits}
\def\rank{\mathop{\rm rank}\nolimits}
\def\End{\mathop{\rm End}\nolimits}
\def\tilt{\mathop{\rm tilt}\nolimits}
\def\Proj{\mathop{\rm Proj}\nolimits}
\def\proj{\mathop{\rm proj}\nolimits}
\def\Gen{\mathop{\rm Gen}\nolimits}
\def\id{\mathop{\rm id}\nolimits}
\begin{document}

\newcommand{\nc}{\newcommand}
\def\PP#1#2#3{{\mathrm{Pres}}^{#1}_{#2}{#3}\setcounter{equation}{0}}
\def\ns{$n$-star}\setcounter{equation}{0}
\def\nt{$n$-tilting}\setcounter{equation}{0}
\def\Ht#1#2#3{{{\mathrm{Hom}}_{#1}({#2},{#3})}\setcounter{equation}{0}}
\def\qp#1{{${(#1)}$-quasi-projective}\setcounter{equation}{0}}
\def\mr#1{{{\mathrm{#1}}}\setcounter{equation}{0}}
\def\mc#1{{{\mathcal{#1}}}\setcounter{equation}{0}}
\def\HD{\mr{Hom}_{\mc{D}}}
\def\HC{\mr{Hom}_{\mc{C}}}
\def\AdT{\mr{Add}_{\mc{T}}}
\def\adT{\mr{add}_{\mc{T}}}
\def\Kb{\mc{K}^b(\mr{Proj}R)}
\def\kb{\mc{K}^b(\mc{P}_R)}
\def\AdpC{\mr{Adp}_{\mc{C}}}


\newtheorem{theorem}{Theorem}[section]
\newtheorem{proposition}[theorem]{Proposition}
\newtheorem{lemma}[theorem]{Lemma}
\newtheorem{corollary}[theorem]{Corollary}
\newtheorem{conjecture}[theorem]{Conjecture}
\newtheorem{question}[theorem]{Question}
\newtheorem{definition}[theorem]{Definition}
\newtheorem{example}[theorem]{Example}

\newtheorem{remark}[theorem]{Remark}
\def\Pf#1{{\noindent\bf Proof}.\setcounter{equation}{0}}
\def\>#1{{ $\Rightarrow$ }\setcounter{equation}{0}}
\def\<>#1{{ $\Leftrightarrow$ }\setcounter{equation}{0}}
\def\bskip#1{{ \vskip 20pt }\setcounter{equation}{0}}
\def\sskip#1{{ \vskip 5pt }\setcounter{equation}{0}}
\def\bg#1{\begin{#1}\setcounter{equation}{0}}
\def\ed#1{\end{#1}\setcounter{equation}{0}}
\def\KET{T^{^F\bot}\setcounter{equation}{0}}
\def\KEC{C^{\bot}\setcounter{equation}{0}}

\renewcommand{\thefootnote}{\fnsymbol{footnote}}
\setcounter{footnote}{0}
%
%


\title{\bf Silting Modules over Triangular Matrix Rings
\thanks{This work was partially supported by NSFC (Grant Nos. 11971225, 11571164). } }
\footnotetext{
E-mail:~hpgao07@163.com, huangzy@nju.edu.cn}
\smallskip
\author{\small Hanpeng Gao, Zhaoyong Huang\thanks{Corresponding author.}\\
{\it \footnotesize Department of Mathematics, Nanjing University, Nanjing 210093, Jiangsu Province, P.R. China}}
\date{}
\maketitle
\baselineskip 15pt
%
%
\begin{abstract}
Let $\Lambda,\Gamma$ be rings and $R=\left(\begin{array}{cc}\Lambda & 0 \\ M & \Gamma\end{array}\right)$ the
triangular matrix ring with $M$ a $(\Gamma,\Lambda)$-bimodule. Let $X$ be a right $\Lambda$-module and
$Y$ a right $\Gamma$-module. We prove that $(X, 0)$$\oplus$$(Y\otimes_\Gamma M, Y)$ is a silting
right $R$-module if and only if both $X_{\Lambda}$ and $Y_{\Gamma}$ are silting modules and $Y\otimes_\Gamma M$
is generated by $X$. Furthermore, we prove that if $\Lambda$ and $\Gamma$ are finite dimensional algebras
over an algebraically closed field and $X_{\Lambda}$ and $Y_{\Gamma}$ are finitely generated, then
$(X, 0)$$\oplus$$(Y\otimes_\Gamma M, Y)$ is a support $\tau$-tilting $R$-module if and only if
both $X_{\Lambda}$ and $Y_{\Gamma}$ are support $\tau$-tilting modules, $\Hom_\Lambda(Y\otimes_\Gamma M,\tau X)=0$
and $\Hom_\Lambda(e\Lambda, Y\otimes_\Gamma M)=0$ with $e$ the maximal idempotent such that
$\Hom_\Lambda(e\Lambda, X)=0$.
\vspace{10pt}

\noindent {\it 2010 Mathematics Subject Classification}: 16G10, 16E30.


\noindent {\it Key words and phrases}: (Partial) silting modules, Tilting modules,
$\tau$-rigid modules, Support $\tau$-tilting modules, Triangular matrix rings.

\end{abstract}
%
\vskip 30pt

\section{Introduction}

Tilting modules are fundamental in the representation theory of algebras. It is important to construct
a new tilting module from a given one and mutation of tilting modules is a very effective way to do it.
Happel and Unger \cite{HU} gave some necessary and sufficient conditions under which
mutation of tilting modules is possible; however, mutation of tilting modules may not be realized.

As a generalization of tilting modules,  support $\tau$-tilting modules over
finite dimensional algebras were introduced by Adachi, Iyama and Reiten \cite{AIR}, and they showed that
mutation of all support $\tau$-tilting modules is possible. A new (support $\tau$-)tilting module can be
constructed by an algebra extension. For example, Assem, Happel and Trepode \cite{AHT} studied how to
extend and restrict tilting modules by given tilting modules for the one-point extension of an algebra by
a projective module. Suarez \cite{S} generalized this result to the case for support $\tau$-tilting modules.

To generalize tilting modules over an arbitrary ring and support $\tau$-tilting modules over a finite
dimensional algebra, (partial) silting modules over an arbitrary ring were introduced by Angeleri H\"ugel,
Marks and Vit\'oria \cite{AMV}. It was proved in \cite[Proposition 3.15]{AMV} that a finitely generated module is partial
silting (resp. silting) if and only if it is $\tau$-rigid (resp. support $\tau$-tilting) over a finite
dimensional algebra. Silting modules share many properties with tilting modules and support
$\tau$-tilting modules, see \cite{AMV1,AH,BF,BZ} and the references therein.

Let $\Lambda$, $\Gamma$ be rings and $M$ a $(\Gamma,\Lambda)$-bimodule. Then we can construct
the triangular matrix ring $\left(\begin{array}{cc}\Lambda & 0 \\
M & \Gamma\end{array}\right)$ by the ordinary operation on matrices, see \cite[p.76]{ARS}.
If $M$ and $N$ are two $\Lambda$-modules with $\Hom_\Lambda(N,M)=0$, then the endomorphism ring of
$M\oplus N$ is exactly the triangular matrix ring $\left(\begin{array}{cc}\End_\Lambda(M) & 0 \\
\Hom_\Lambda(M,N) & \End_\Lambda(N)\end{array}\right)$. Moreover, a one-point extension of an algebra
is a special triangular matrix algebra. In \cite{CGR}, Chen, Gong and Rump gave a criterion for lifting
tilting modules from an arbitrary ring to its trivial extension ring, and they constructed tilting modules
over triangular matrix rings under some conditions.
The aim of this paper is to construct (partial) silting modules over triangular matrix rings.
This paper is organized as follows.

In Section 2, we give some terminology and some preliminary results.
Let $\Lambda,\Gamma$ be rings and $R=\left(\begin{array}{cc}\Lambda & 0 \\
M & \Gamma\end{array}\right)$ the triangular matrix ring with $M$ a $(\Gamma,\Lambda)$-bimodule.
In Section 3, for any $X_{\Lambda}$ and $Y_{\Gamma}$, we investigate the relationship
between the projective presentations of $X_{\Lambda}$ and $Y_{\Gamma}$
and the projective presentation of the right $R$-module $(X, 0)$$\oplus$$(Y\otimes_\Gamma M, Y)$.
Then we give a necessary and sufficient condition for constructing (partial) silting right $R$-modules
from (partial) silting right $\Lambda$-modules and right $\Gamma$-modules (Theorem \ref{3.4}).
As a consequence, we get that if $_{\Gamma}M$ is flat, then
$(X,0)$$\oplus$$(Y\otimes_\Gamma M, Y)$ is a tilting right $R$-module if and only if
both $X_{\Lambda}$ and $Y_{\Gamma}$ are tilting and $Y\otimes_\Gamma M$ is generated by $X$ (Theorem \ref{3.8}).
In Section 4, $\Lambda$ and $\Gamma$ are finite dimensional $k$-algebras over an algebraically closed field $k$
and all modules considered are finitely generated and basic. As an application of Theorem \ref{3.4},
we give a necessary and sufficient condition for constructing support $\tau$-tilting right $R$-modules
from support $\tau$-tilting right $\Lambda$-modules and right $\Gamma$-modules (Theorem \ref{4.3}).
Furthermore, we generalize this result to tensor algebras (Theorem \ref{4.8}).
In Section 5, we give an example to illustrate our results; in particular, we may construct many support $\tau$-tilting
modules over triangular matrix algebras.

\section{Preliminaries}

Throughout this paper, all rings are associative with identities and all modules are unitary.
For a ring $\Lambda$, $\Mod \Lambda$ is the category of right $\Lambda$-modules, $\mod \Lambda$ is
the category of finitely generated right $\Lambda$-modules, and all subcategories
of $\Mod \Lambda$ or $\mod \Lambda$ are full and closed under isomorphisms.
We use $\Proj\Lambda$ (resp. $\proj\Lambda$) to denote the subcategory of $\Mod \Lambda$
(resp. $\mod\Lambda$) consisting of (resp. finitely generated) projective modules. For a module $M\in\Mod \Lambda$,
$\Add M$ is the subcategory of $\Mod \Lambda$ consisting of direct summands of direct sums of copies of $M$ and $\Gen M$
is the subcategory of $\Mod \Lambda$ consisting of quotients of direct sums of copies of $M$.

\subsection{Triangular matrix rings}

Let $\Lambda$, $\Gamma$ be rings and $_\Gamma M_\Lambda$ a ($\Gamma,\Lambda)$-bimodule. Then the triangular matrix ring
$$R:=\left(\begin{array}{cc}\Lambda & 0 \\
M & \Gamma\end{array}\right)$$
can be defined by the ordinary operation on matrices.
Let $\mathcal{C}_R$ be the category whose objects are the triples ${(X, Y)}_{f}$ with $X\in\Mod \Lambda$, $Y\in\Mod \Gamma$
and $f\in\Hom_\Lambda(Y\otimes_\Gamma M,X)$ (sometimes, $f$ is omitted).
The morphisms from ${(X, Y)}_{f}$ to ${(X', Y')}_{f'}$ are pairs of ${(\alpha, \beta)}$
such that the following diagram
$$\xymatrix{Y\otimes_\Gamma M\ar[d]_{\beta\otimes M}\ar[rr]^f&&X\ar[d]^\alpha\\
Y'\otimes_\Gamma M\ar[rr]^{f'}&&X'\\}$$
commutes, where $\alpha\in\Hom_\Lambda(X,X')$ and $\beta\in\Hom_\Gamma(Y,Y')$.

It is well known that there exists an equivalence of categories between $\Mod R$ and $\mathcal{C}_R$ (\cite{G}).
Hence we can view an $R$-module as a triples ${(X, Y)}_{f}$ with $X\in\Mod \Lambda$ and $Y\in\Mod \Gamma$.
Moreover, a sequence
$$0 \to {(X_1, Y_1)}\stackrel{{{(\alpha_1, \beta_1)}}}{\longrightarrow}
{(X_2, Y_2)}\stackrel{{{(\alpha_2, \beta_2)}}}{\longrightarrow}{(X_3, Y_3)}\to 0$$
in $\Mod R$ is exact if and only if
$$0 \to X_1\stackrel{\alpha_1}{\longrightarrow} X_2\stackrel{\alpha_2}{\longrightarrow}X_3\to 0$$
is exact in $\Mod \Lambda$ and
$$0 \to Y_1\stackrel{\beta_1}{\longrightarrow} Y_2\stackrel{\beta_2}{\longrightarrow}Y_3\to 0$$
is exact in $\Mod \Gamma$. All indecomposable projective modules in $\Mod R$ are exactly of the forms
${(P_\Lambda,  0)}$ and ${(Q_\Gamma\otimes_\Gamma M,  Q_\Gamma)}_{\id}$,
where $P_\Lambda$ is an indecomposable projective $\Lambda$-module and $Q_\Gamma$ is an indecomposable
projective $\Gamma$-module.

\subsection{Silting modules}
Let $\Lambda$ be a ring and
$$\sigma: P_{1}\rightarrow P_0$$ a homomorphism in $\Mod \Lambda$ with
$P_{1}, P_{0}\in \Proj\Lambda$. We write
\begin{center}
$D_\sigma:=\{A\in \Mod \Lambda\mid\Hom_\Lambda(\sigma, A)$ is epic$\}$.
\end{center}
Recall that a subcategory $\mathcal{T}$ of $\Mod \Lambda$ is called a {\it torsion class}
if it is closed under images, direct sums and extensions (c.f. \cite[Chapter VI]{ASS}).

\begin{definition}\label{2.1} {\rm (\cite[Definition 3.7]{AMV})
Let $T\in \Mod \Lambda$.
\begin{enumerate}
\item[(1)] $T$ is called {\it partial silting} if there exists a  projective presentation $\sigma$ of $T$ such that
$D_\sigma$ is a torsion class and $T\in D_\sigma$.
\item[(2)] $T$ is called {\it silting} if  there exists a  projective presentation $\sigma$ of $T$ such that  $\Gen T=D_\sigma$.
\end{enumerate}
Sometimes, we also say that $T$ is a ({\it partial}) {\it silting} module with respect to $\sigma$.}
\end{definition}

By \cite[Lemma 3.6(1)]{AMV}, $D_\sigma$ is always closed under images and extensions. Hence,
$D_\sigma$ is a torsion class if and only if it is closed under direct sums. This is always true when $\sigma$
is a map in $\proj\Lambda$. Moreover, it is trivial that $T\in D_\sigma$ implies $\Gen T\subseteq D_\sigma$.

Given a subcategory $\mathcal{X}$ of $\Mod \Lambda$, recall that a {\it left $\mathcal{X}$-approximation} of a module
$M\in\Mod \Lambda$ is a homomorphism $\phi:M\rightarrow X$ with $X\in \mathcal{X}$ such that $\Hom_\Lambda(\phi,X')$
is epic for any $X'\in \mathcal{X}$. The following result establishes the relation between partial silting modules
and silting modules.

\begin{proposition}\label{2.2} {\rm (\cite[Proposition 3.11]{AMV})}
Let $T\in \Mod \Lambda$  with a projective presentation $\sigma$. Then $T$ is  a silting module with respect to
$\sigma$ if and only if  $T$ is a partial silting module with respect to $\sigma$ and there exists an exact sequence
$$\Lambda\stackrel{\phi}\longrightarrow T^0\rightarrow T^1\rightarrow 0$$
in $\Mod \Lambda$ with $T^0, T^1\in \Add T$ and $\phi$ a left $D_\sigma$-approximation.
\end{proposition}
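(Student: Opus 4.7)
The plan is to tackle the two implications separately, with the reverse direction being more direct and the forward direction containing the genuine technical content.

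For $(\Leftarrow)$, I assume $T$ is partial silting with respect to $\sigma$ and that the exact sequence is given. The inclusion $\Gen T \subseteq D_\sigma$ is essentially automatic: $T \in D_\sigma$ by partial silting, and $D_\sigma$ is a torsion class, hence closed under images and direct sums; this step does not use the exact sequence at all. For the reverse inclusion, I take $X \in D_\sigma$ together with an epimorphism $\pi \colon \Lambda^{(J)} \to X$. The left $D_\sigma$-approximation property of $\phi$, applied to each component $\pi_j \colon \Lambda \to X$, yields $\pi_j = g_j \circ \phi$ for some $g_j \colon T^0 \to X$; assembling the $g_j$ over $J$ produces $g \colon (T^0)^{(J)} \to X$ with $g \circ \phi^{(J)} = \pi$, and $g$ is epic because $\pi$ is. Hence $X \in \Gen T^0 \subseteq \Gen T$, so $\Gen T = D_\sigma$ and $T$ is silting.

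For $(\Rightarrow)$, I assume $\Gen T = D_\sigma$. That $T$ is partial silting is immediate, since $T \in \Gen T = D_\sigma$ and $\Gen T$ is always a torsion class. The substantive task is producing the exact sequence $\Lambda \to T^0 \to T^1 \to 0$ with $T^0, T^1 \in \Add T$ and $\phi$ a left $D_\sigma$-approximation. A helpful preliminary observation is that, since $\Lambda$ is projective, every map $\Lambda \to X$ with $X \in \Gen T$ lifts along a surjection $T^{(I)} \to X$ to some $\Lambda \to T^{(I)}$. Consequently, any left $\Add T$-approximation of $\Lambda$ is automatically a left $\Gen T = D_\sigma$-approximation, and it suffices to produce $\phi \colon \Lambda \to T^0$ which is a left $\Add T$-approximation with cokernel again in $\Add T$.

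The main obstacle is controlling $T^1 := \Coker \phi$ so that it lies in $\Add T$ rather than merely in $\Gen T$. The naive choice $T^0 = T^{\Hom_\Lambda(\Lambda, T)}$ with $\phi$ the canonical evaluation map lands in a product rather than a coproduct of copies of $T$, so in general $T^0 \notin \Add T$; a cardinality argument is required to fit the construction inside $\Add T$. Following the strategy of \cite{AMV}, one would choose a sufficient set $S$ of morphisms $\Lambda \to T$ witnessing the approximation property, form $\phi \colon \Lambda \to T^{(S)}$, and then exploit the full hypothesis $\Gen T = D_\sigma$ to show the cokernel decomposes as a summand of an $\Add T$-object. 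This last step is the technical heart of the argument, and it is precisely where the silting hypothesis, as opposed to partial silting, is genuinely used.
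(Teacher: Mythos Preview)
The paper does not supply its own proof of this proposition: it is quoted as a preliminary from \cite[Proposition 3.11]{AMV} and used as a black box, so there is nothing in the paper to compare your attempt against.

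Judged on its own, your $(\Leftarrow)$ direction is correct and complete. Your $(\Rightarrow)$ direction, however, is not a proof but an outline with an acknowledged hole. Two points. First, the remark that ``$\Gen T$ is always a torsion class'' is false for arbitrary $T$; what you actually need (and have) is that $D_\sigma$ is closed under images and extensions by \cite[Lemma~3.6]{AMV}, and closure under direct sums comes for free from $D_\sigma=\Gen T$. Second, and more seriously, you never construct the sequence. You propose taking a set $S$ of maps $\Lambda\to T$ and then ``exploit the full hypothesis $\Gen T=D_\sigma$ to show the cokernel decomposes as a summand of an $\Add T$-object'', but you give no mechanism for this, and the naive choice of $S$ does not obviously yield either a $D_\sigma$-approximation or a cokernel in $\Add T$.

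For the record, the argument in \cite{AMV} does not run through a set of maps $\Lambda\to T$. One instead forms the pushout of $\sigma^{(I)}\colon P_1^{(I)}\to P_0^{(I)}$ along the canonical map $P_1^{(I)}\to\Lambda$, where $I=\Hom_\Lambda(P_1,\Lambda)$; this produces $\phi\colon\Lambda\to T^0$ with $\Coker\phi\cong T^{(I)}\in\Add T$ for free, and one checks directly that $\phi$ is a left $D_\sigma$-approximation. The remaining point, that $T^0$ itself lies in $\Add T$, uses that $T^0\in D_\sigma=\Gen T$ together with the characterisation of $\Add T$ as the Ext-projectives in $\Gen T$ for a silting module. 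That last step is exactly the ``technical heart'' you flag but do not carry out.
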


\subsection{Support $\tau$-tilting modules}
In this subsection, $\Lambda$ is a finite dimensional $k$-algebra over an algebraically closed field $k$.
The Auslander-Reiten translation is denoted by $\tau$.
For a module $M\in\mod \Lambda$, $|M|$ is the number of pairwise non-isomorphic direct summands of $M$. All modules
considered are finitely generated and basic.

\begin{definition}\label{2.3} {\rm (\cite[Definition 0.1]{AIR})
Let $M\in\mod\Lambda$.
\begin{enumerate}
\item[(1)] $M$ is called {\it $\tau$-rigid} if $\Hom_\Lambda(M,\tau M)=0$.
\item[(2)] $M$ is called {\it $\tau$-tilting}  if it is $\tau$-rigid and
$|M|=|\Lambda|$.
\item[(3)] $M$ is called {\it support $\tau$-tilting} if it is a $\tau$-tilting $\Lambda/\Lambda e\Lambda$-module
for some idempotent $e$ of $\Lambda$.
\end{enumerate}}
\end{definition}


\begin{lemma}\label{2.4}
If $M$ is a $\tau$-rigid $\Lambda$-module and $\Hom_\Lambda(e\Lambda,M)=0$ for some idempotent $e$
of $\Lambda$, then $|M|+|e\Lambda|\leq|\Lambda|$.
\end{lemma}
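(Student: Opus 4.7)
The plan is to reduce the inequality to the well-known cardinality formula for $\tau$-tilting modules, applied over the idempotent quotient $\bar\Lambda := \Lambda/\Lambda e\Lambda$.

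First, I would unwind the hypothesis. The isomorphism $\Hom_\Lambda(e\Lambda,M) \cong Me$ shows $\Hom_\Lambda(e\Lambda,M)=0$ is equivalent to $Me=0$, which is in turn equivalent to $M \cdot \Lambda e\Lambda=0$. Hence $M$ is canonically a right $\bar\Lambda$-module, and because everything is basic one has $|\bar\Lambda| = |\Lambda|-|e\Lambda|$ (the indecomposable summands of $\bar\Lambda$ correspond to the primitive idempotents appearing in $1-e$).

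Next, I would show that $M$ remains $\tau$-rigid when regarded as a $\bar\Lambda$-module. Starting from a projective presentation $P_1\xrightarrow{\sigma}P_0\to M\to 0$ in $\mod\Lambda$, applying $-\otimes_\Lambda\bar\Lambda$ produces a projective presentation $\sigma\otimes\bar\Lambda$ of $M$ in $\mod\bar\Lambda$, and for any $\bar\Lambda$-module $N$ the change-of-algebra adjunction identifies $\Hom_{\bar\Lambda}(\sigma\otimes\bar\Lambda,N)$ with $\Hom_\Lambda(\sigma,N)$. Combined with the standard characterization of $\tau$-rigidity as the surjectivity of $\Hom_\Lambda(\sigma,M)$, this transfers $\tau_\Lambda$-rigidity to $\tau_{\bar\Lambda}$-rigidity.

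Finally, I would invoke Bongartz completion for $\tau$-rigid modules over $\bar\Lambda$ to extend $M$ to a basic $\tau$-tilting $\bar\Lambda$-module $T$ containing $M$ as a direct summand, whence
\[
|M|\leq |T|=|\bar\Lambda|=|\Lambda|-|e\Lambda|,
\]
which is the claim. The main obstacle I anticipate is the step above verifying that $\tau$-rigidity descends along the idempotent quotient; this depends on the characterization of $\tau$-rigidity through any (not necessarily minimal) projective presentation together with the compatibility of $\Hom$ under the change-of-algebra adjunction, and requires care since tensoring with $\bar\Lambda$ need not preserve minimality of projective presentations.
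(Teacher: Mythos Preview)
Your proposal is correct and follows essentially the same route as the paper: the paper's proof simply cites \cite[Lemma~2.1]{AIR} to obtain that $M$ is $\tau$-rigid over $\bar\Lambda=\Lambda/\Lambda e\Lambda$ and then invokes the standard bound $|M|\le|\bar\Lambda|$, which is exactly your reduction-plus-Bongartz argument spelled out in detail. Your flagged obstacle (that $\sigma\otimes_\Lambda\bar\Lambda$ need not be minimal) is not a real issue, since for any projective presentation $\tilde\sigma$ one has $D_{\tilde\sigma}\subseteq D_{\sigma_{\min}}$, so surjectivity of $\Hom_{\bar\Lambda}(\sigma\otimes\bar\Lambda,M)$ already forces $\Hom_{\bar\Lambda}(M,\tau_{\bar\Lambda}M)=0$.
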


\begin{proof}
Let $M$ be a $\tau$-rigid $\Lambda$-module and $\Hom_\Lambda(e\Lambda,M)=0$ for some idempotent $e$
of $\Lambda$. Then $M$ is a $\tau$-rigid $\Lambda/\Lambda e\Lambda$-module by \cite[Lemma 2.1]{AIR}.
So $|M|+|e\Lambda|\leq|\Lambda|$.
\end{proof}

Sometimes, it is convenient to view support $\tau$-tilting modules and $\tau$-rigid modules as
certain pairs of modules in $\mod \Lambda$.

\begin{definition} \label{2.5} {\rm (\cite[Definition 0.3]{AIR})}
Let $(M,P)$ be a pair in $\mod\Lambda$ with $P\in \proj\Lambda$.
\begin{enumerate}
\item[(1)] $(M, P)$ is called a {\it $\tau$-rigid pair} if $M$ is $\tau$-rigid and $\Hom_\Lambda(P,M)=0$.
\item[(2)] $(M, P)$ is called a {\it support $\tau$-tilting pair} if $M$ is $\tau$-rigid and $|M|+|P|=|\Lambda|$.
\end{enumerate}
\end{definition}

It was shown in \cite[Proposition 2.3]{AIR} that $(M,P)$ is a support $\tau$-tilting pair in
$\mod \Lambda$ if and only if $M$ is a $\tau$-tilting $\Lambda/\Lambda e\Lambda$-module with $e\Lambda\cong P$.
Recall that $M\in\mod \Lambda$ is called {\it sincere} if there does not exist
a non-zero idempotent $e$ of $\Lambda$ that annihilates $M$. Notice that all $\tau$-tilting modules are sincere,
so $M$ is a support $\tau$-tilting $\Lambda$-module if and only if $M$ is a $\tau$-rigid $\Lambda$-module
and $|M|+|e\Lambda|=|\Lambda|$, where $e$ is the maximal idempotent such that $\Hom_\Lambda(e\Lambda,M)=0$.

\begin{lemma}\label{2.6} {\rm (\cite[Proposition 2.4]{AIR})}
Let $X\in\mod\Lambda$ and
$$P_1 \stackrel{f_0}{\longrightarrow}P_0 {\rightarrow} X{\rightarrow} 0$$ a minimal projective presentation
of $X$ in $\mod\Lambda$. For any $Y\in\mod \Lambda$, $\Hom_\Lambda(f_0,Y)$ is epic if and only if
$\Hom_\Lambda(Y,\tau X)=0$.
\end{lemma}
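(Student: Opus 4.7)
The plan is to derive the lemma from the Auslander--Reiten formula, following its classical route through the transpose and the tensor-hom adjunction. Recall that $\tau X=D\,\mathrm{Tr}\,X$, where $D=\Hom_k(-,k)$ is the standard $k$-duality and the transpose $\mathrm{Tr}\,X$ is computed from the given minimal projective presentation. So the first step is to apply $(-)^{*}:=\Hom_\Lambda(-,\Lambda)$ to $P_1\xrightarrow{f_0}P_0\to X\to 0$, producing the defining exact sequence of left $\Lambda$-modules $P_0^{*}\xrightarrow{f_0^{*}}P_1^{*}\to\mathrm{Tr}\,X\to 0$.

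Next, I would tensor this sequence on the left with the right $\Lambda$-module $Y$ and use right exactness of $Y\otimes_\Lambda-$ to obtain $Y\otimes_\Lambda P_0^{*}\to Y\otimes_\Lambda P_1^{*}\to Y\otimes_\Lambda\mathrm{Tr}\,X\to 0$. The decisive identification is the natural isomorphism $Y\otimes_\Lambda P^{*}\cong\Hom_\Lambda(P,Y)$ for any $P\in\proj\Lambda$, which is checked for $P=\Lambda$ and extended additively; because it is functorial in $P$, under it the first map of the previous sequence becomes $\Hom_\Lambda(f_0,Y)$. Hence $\Hom_\Lambda(f_0,Y)$ is surjective if and only if $Y\otimes_\Lambda\mathrm{Tr}\,X=0$.

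Finally, I apply the exact $k$-duality $D$, which detects vanishing, together with the tensor-hom adjunction $D(Y\otimes_\Lambda\mathrm{Tr}\,X)\cong\Hom_\Lambda(Y,D\,\mathrm{Tr}\,X)=\Hom_\Lambda(Y,\tau X)$, and chain the three equivalences to conclude. The only delicate point is the role of minimality of the projective presentation, which is precisely what makes $D\,\mathrm{Tr}\,X$ coincide with $\tau X$ on the nose (a non-minimal presentation would contribute a projective summand to $\mathrm{Tr}\,X$ and hence an injective summand to $D\,\mathrm{Tr}\,X$); I do not anticipate any genuine obstacle beyond careful bookkeeping with left versus right module structures and the naturality of the adjunction isomorphism.
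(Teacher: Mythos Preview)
Your argument is correct: applying $(-)^*$ to the minimal presentation, tensoring with $Y$, invoking the natural isomorphism $Y\otimes_\Lambda P^*\cong\Hom_\Lambda(P,Y)$ for finitely generated projectives, and then dualising via the tensor--hom adjunction is exactly the standard route, and your remark about minimality (ensuring the cokernel of $f_0^*$ is the genuine transpose with no projective summands) is the right justification for why the hypothesis is needed.

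Note, however, that the paper does not supply a proof of this lemma at all: it is quoted verbatim as \cite[Proposition~2.4]{AIR} and used as a black box. So there is no ``paper's own proof'' to compare against here. Your argument is essentially the classical one (and indeed is the proof given in \cite{AIR}), so nothing further is required.
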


Silting modules are intended to generalize support $\tau$-tilting modules. In particular, when restricting to
finitely generated modules over a finite dimensional $k$-algebra, they are equivalent.

\begin{proposition} \label{2.7} {\rm (\cite[Proposition 3.15]{AMV})}
Let $T\in\mod\Lambda$. Then we have
\begin{enumerate}
\item[(1)] $T$ is a partial silting $\Lambda$-module if and only if $T$ is a $\tau$-rigid $\Lambda$-module.
\item[(2)] $T$ is a silting $\Lambda$-module if and only if $T$ is a support $\tau$-tilting $\Lambda$-module.
\end{enumerate}
\end{proposition}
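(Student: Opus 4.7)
The plan is to prove the two equivalences by reducing $D_{\sigma}$ to an easily recognizable class through Lemma \ref{2.6}, and then combining this with Proposition \ref{2.2} and the characterization of support $\tau$-tilting modules via Bongartz-type approximation sequences from \cite{AIR}.

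For (1), I would start with the backward direction. Take $\sigma=f_{0}$ to be the \emph{minimal} projective presentation of $T$. Lemma \ref{2.6} immediately gives
$$D_{f_{0}}=\{A\in\mod\Lambda\mid\Hom_{\Lambda}(A,\tau T)=0\},$$
which is closed under direct sums and quotients (left-exactness of $\Hom(-,\tau T)$) and under extensions (via the long exact sequence), so it is always a torsion class. Hence $T$ is partial silting w.r.t.\ $f_{0}$ precisely when $T\in D_{f_{0}}$, i.e.\ when $\Hom_{\Lambda}(T,\tau T)=0$. For the forward direction I would show that any finitely generated projective presentation $\sigma\uc P_{1}\to P_{0}$ of $T$ decomposes, after a change of basis using the lifting property of projectives and the uniqueness of projective covers, as $\sigma\cong f_{0}\oplus(Q\stackrel{\id}{\to}Q)\oplus(R\to 0)$ for some projectives $Q,R$. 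Then $D_{\sigma}=D_{f_{0}}\cap\{A\mid\Hom_{\Lambda}(R,A)=0\}\subseteq D_{f_{0}}$, and membership $T\in D_{\sigma}$ forces $T\in D_{f_{0}}$, i.e.\ $T$ is $\tau$-rigid.

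For (2), suppose first that $T$ is silting w.r.t.\ some $\sigma$. Part (1) gives $\tau$-rigidity of $T$. By Proposition \ref{2.2}, there is an exact sequence $\Lambda\stackrel{\phi}{\to}T^{0}\to T^{1}\to 0$ with $T^{i}\in\Add T$ and $\phi$ a left $D_{\sigma}$-approximation. Since $T\in D_{\sigma}$ implies $\Add T\subseteq D_{\sigma}$, $\phi$ is in particular a left $\Add T$-approximation, and the characterization of support $\tau$-tilting modules in \cite[Proposition 2.3]{AIR} (combined with Lemma \ref{2.4}) yields that $T$ is support $\tau$-tilting. For the converse, let $e$ be the maximal idempotent with $\Hom_{\Lambda}(e\Lambda,T)=0$ and set $\sigma=f_{0}\oplus(e\Lambda\to 0)$, so that $D_{\sigma}={}^{\perp}(\tau T)\cap\{A\mid Ae=0\}$. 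Part (1) already gives $T\in D_{\sigma}$. The standard Bongartz-type sequence $\Lambda\to T^{0}\to T^{1}\to 0$ supplied by \cite{AIR} produces a left $\Add T$-approximation, which promotes to a left $D_{\sigma}$-approximation once one shows $\Gen T=D_{\sigma}$; Proposition \ref{2.2} then delivers silting.

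The main obstacle lies in the converse of (2): verifying $\Gen T=D_{\sigma}$. The inclusion $\Gen T\subseteq D_{\sigma}$ is routine ($T\in D_{\sigma}$ and $D_{\sigma}$ is closed under quotients and sums). The reverse inclusion is the delicate point, and the cleanest way is to pass to the quotient algebra $\bar{\Lambda}:=\Lambda/\Lambda e\Lambda$, where $T$ becomes a genuine $\tau$-tilting module. There one has $\Gen_{\bar{\Lambda}}T={}^{\perp_{\bar{\Lambda}}}(\tau_{\bar{\Lambda}}T)$ by the classical Auslander--Smal\o/AIR description of the torsion class associated with a $\tau$-tilting module. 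One then checks that any $A\in D_{\sigma}$, being annihilated by $e$, lies in $\mod\bar{\Lambda}$ and satisfies $\Hom_{\bar{\Lambda}}(A,\tau_{\bar{\Lambda}}T)=0$, so $A\in\Gen_{\bar{\Lambda}}T=\Gen_{\Lambda}T$. Packaging this reduction cleanly, and in particular comparing $\tau_{\Lambda}T$ with $\tau_{\bar{\Lambda}}T$ on $\mod\bar{\Lambda}$, is where the real work lies.
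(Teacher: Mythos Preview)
The paper does not give its own proof of Proposition \ref{2.7}; it is simply quoted from \cite[Proposition 3.15]{AMV}. So there is no in-paper argument to compare against, and your proposal is essentially a reconstruction of the original proof in \cite{AMV}.

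Your outline is broadly along the right lines, but there is one genuine gap. In the forward direction of (1) you write ``any finitely generated projective presentation $\sigma$ decomposes \dots''. However, Definition \ref{2.1} allows $\sigma\uc P_{1}\to P_{0}$ with $P_{0},P_{1}\in\Proj\Lambda$, not just $\proj\Lambda$; nothing in the hypothesis ``$T$ is partial silting'' forces the witnessing presentation to be finitely generated. The Krull--Schmidt splitting $\sigma\cong f_{0}\oplus(Q\stackrel{\id}{\to}Q)\oplus(R\to 0)$ is not available for infinitely generated projectives, so as written your argument does not cover all cases. You need an extra step: either show directly that $D_{\sigma}\subseteq D_{f_{0}}$ for \emph{any} presentation $\sigma$ of $T$ (construct a chain map $(f_{0})\to(\sigma)$ over $\id_{T}$ using projectivity and pull back the factorization), or argue that a finitely presented $T$ which is partial silting with respect to some $\sigma$ is automatically partial silting with respect to a presentation in $\proj\Lambda$.

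A smaller point in (2) forward: Proposition \ref{2.2} produces $T^{0},T^{1}\in\Add T$, which a priori live in $\Mod\Lambda$, not $\mod\Lambda$. Before invoking the \cite{AIR} characterisation you should observe that $\Lambda$ is finitely generated, a left $D_{\sigma}$-approximation of $\Lambda$ can be taken in $\add T$ (since $\Hom_{\Lambda}(\Lambda,-)$ commutes with direct sums), and hence $T^{0},T^{1}$ may be chosen finitely generated. Your handling of the converse of (2), including the reduction to $\bar{\Lambda}=\Lambda/\Lambda e\Lambda$ and the comparison of $\tau_{\Lambda}$ with $\tau_{\bar{\Lambda}}$ via \cite[Lemma 2.1]{AIR}, is the standard route and is correct.
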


Let $T\in\mod \Lambda$ with a minimal projective presentation $\sigma$.
Then $T$ is support $\tau$-tilting if and only if $\Gen T$ consists of $\Lambda$-modules $M$
such that $\Hom_\Lambda(\sigma\oplus\sigma', M)$ is epic, where $\sigma'$ is the complex $(e\Lambda\rightarrow0)$
and $e$ is a suitable idempotent of $\Lambda$ (\cite[Theorem 2.5]{AMV}). In fact, it follows from
\cite[Theorem 4.9]{AMV} and \cite[Theorem 3.2]{AIR} that $(T, e\Lambda)$ is a support
$\tau$-tilting pair if and only if $T$ is a silting module with respect to $\sigma\oplus\sigma'$.

\section{Silting modules over triangular matrix rings}

From now on, $\Lambda$, $\Gamma$ are rings and $_\Gamma M_\Lambda$ a $(\Gamma,\Lambda)$-bimodule
and $$R:=\left(\begin{array}{cc}\Lambda & 0 \\
M & \Gamma\end{array}\right)$$ is the corresponding triangular matrix ring.

Let $X\in\Mod \Lambda$ and $Y\in\Mod\Gamma$, and let
$$P_{1} \stackrel{\sigma_X}{\longrightarrow}P_0 {\rightarrow} X{\rightarrow} 0$$
and $$Q_{1}\stackrel{\sigma_Y}{\longrightarrow}Q_0 {\rightarrow} Y{\rightarrow} 0\eqno{(3.1)}$$
be projective presentations of $X$ and $Y$ respectively,
with $P_{1}, P_0\in \Proj\Lambda$ and $Q_{1}, Q_0\in \Proj\Gamma$.
Applying the functor $-\otimes_\Gamma M$ to (3.1), we get the following exact sequence
$$Q_1\otimes_\Gamma M \stackrel{\sigma_Y\otimes M}{\longrightarrow}
Q_0\otimes_\Gamma M {\rightarrow} Y\otimes_\Gamma M{\rightarrow} 0.$$
Hence, we get a projective presentation of $(X, 0)$$\oplus$$(Y\otimes_\Gamma M, Y)$
denoted by $\sigma=(\smallmatrix a &0\\0& b\endsmallmatrix)$:
\begin{center}
$(P_{1}, 0)$$\oplus$$(Q_{1}\otimes_\Gamma M,  Q_{1})$
$\stackrel{\sigma}{\longrightarrow}$$(P_0,0)$$\oplus$$(Q_0\otimes_\Gamma M, Q_0)$
${\rightarrow}$$(X, 0)$$\oplus$$(Y\otimes_\Gamma M, Y)$${\rightarrow} 0,$
\end{center}
where $a=$$({\sigma_X}, 0)$ and $b=$$({\sigma_Y}\otimes M, {\sigma_Y})$.

\begin{lemma}\label{3.1}
Let $X_1\in\Mod \Lambda$ and $Y_1\in\Mod\Gamma$.
\begin{enumerate}
\item[(1)] ${({X_1}, {Y_1})}_h\in D_{\sigma}$ if and only if  $X_1\in D_{\sigma_X}$
and $Y_1\in D_{\sigma_Y}$.
\item[(2)] If $X_1\in D_{\sigma_X}$, then $({X_1}, 0)\in D_{\sigma}$.
\item[(3)] If $Y_1\in D_{\sigma_Y}$, then $(0, {Y_1})\in D_{\sigma}$.
\item[(4)] If $Y_1\in D_{\sigma_Y}$ and $Y_1\otimes_\Gamma M\in D_{\sigma_X}$,
then $({Y_1\otimes_\Gamma M, Y_1)}_{\id}\in D_{\sigma}$.
\end{enumerate}
\end{lemma}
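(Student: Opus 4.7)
The plan is to reduce everything to part (1), which is the main content; parts (2), (3), (4) are then immediate specializations by choosing appropriate data $(X_1,Y_1,h)$.

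First I would compute the two basic Hom groups. For any $P\in\Proj\Lambda$ and any $(X_1,Y_1)_h$, projecting onto the $\Lambda$-component gives a natural isomorphism $\Hom_R((P,0),(X_1,Y_1)_h)\cong \Hom_\Lambda(P,X_1)$, since a morphism of triples from $(P,0)$ has trivial $\Gamma$-component. For any $Q\in\Proj\Gamma$, a morphism $(\alpha,\beta)\colon (Q\otimes_\Gamma M,Q)_{\id}\to (X_1,Y_1)_h$ must satisfy $\alpha=h\circ(\beta\otimes_\Gamma M)$, so $\alpha$ is determined by $\beta$; this yields a natural isomorphism $\Hom_R((Q\otimes_\Gamma M,Q)_{\id},(X_1,Y_1)_h)\cong \Hom_\Gamma(Q,Y_1)$. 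These two identifications are the only real computation in the proof, and they explain why the triangular structure decouples cleanly in this situation.

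Using these, I would apply $\Hom_R(-,(X_1,Y_1)_h)$ to the block diagonal map $\sigma=\bigl(\begin{smallmatrix}a&0\\0&b\end{smallmatrix}\bigr)$. The component $\Hom_R(a,(X_1,Y_1)_h)$ is identified with $\Hom_\Lambda(\sigma_X,X_1)$, and $\Hom_R(b,(X_1,Y_1)_h)$ is identified with $\Hom_\Gamma(\sigma_Y,Y_1)$. Hence $\Hom_R(\sigma,(X_1,Y_1)_h)$ is isomorphic to the direct sum $\Hom_\Lambda(\sigma_X,X_1)\oplus \Hom_\Gamma(\sigma_Y,Y_1)$, which is epic if and only if each summand is. This gives (1).

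For (2) take $(X_1,Y_1)_h=(X_1,0)$ and apply (1); the condition $0\in D_{\sigma_Y}$ is vacuous. For (3) take $(X_1,Y_1)_h=(0,Y_1)$; the condition $0\in D_{\sigma_X}$ is vacuous. For (4) take $h=\id$ on $Y_1\otimes_\Gamma M$ and apply (1) directly with the stated hypotheses. I do not expect genuine obstacles here: the one point to be careful about is tracking the commutativity condition $h\circ(\beta\otimes_\Gamma M)=\alpha$ in the Hom identifications, and verifying that the identifications are natural in the source so that they transport the map induced by $\sigma$ to the stated direct sum of maps.
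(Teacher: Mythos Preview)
Your proposal is correct and follows essentially the same approach as the paper: both arguments hinge on the observation that a morphism from $(P,0)$ is determined by its $\Lambda$-component and a morphism from $(Q\otimes_\Gamma M,Q)_{\id}$ is determined by its $\Gamma$-component via $\alpha=h\circ(\beta\otimes M)$, which decouples $\Hom_R(\sigma,-)$ into the direct sum $\Hom_\Lambda(\sigma_X,-)\oplus\Hom_\Gamma(\sigma_Y,-)$. The paper carries this out by explicit element chasing with long chains of equalities, whereas you package the same content as natural isomorphisms of Hom groups up front; the mathematical content is identical, and parts (2)--(4) are deduced from (1) in both versions.
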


\begin{proof}
(1) Let $f\in\Hom_{\Lambda}(P_1,X_1)$. Then
$((f, 0), 0)\in\Hom_R((P_1,0)\oplus(Q_1\otimes_\Gamma M, Q_1),(X_1, Y_1))$.
Since $(X_1, Y_1)_h\in D_{\sigma}$,
there exists $(({f'}, 0),y):(P_0,0)\oplus(Q_0\otimes_\Gamma M, Q_0)\to(X_1,Y_1)$
with $f'\in\Hom_\Lambda(P_0,X_1)$ such that the following diagram
$$\xymatrix@C=15pt{{(P_1,0)} \oplus{(Q_1\otimes_\Gamma M,Q_1)}
\ar[rr]^{\sigma}\ar[d]_{({(f,0)},0)}&&{(P_0,0)}
\oplus{(Q_0\otimes_\Gamma M,Q_0)}\ar@{.>}[lld]^{({(f',0)},y)}\\
{(X_1, Y_1)}&&
}$$ commutes. So $((f,0),0)=(({f'},0),y)\circ\sigma$, and
hence ${(f,0)}={(f',0)}\circ a={(f',0)}\circ{(\sigma_X,0)}$ and
$f=f'\circ \sigma_X$. It implies $X_1\in D_{\sigma_X}$.

Let $g\in\Hom_\Gamma(Q_1,Y_1)$. Then $(0,(h\circ(g\otimes M),g))$
$\in\Hom_R((P_1,0)\oplus(Q_1\otimes_\Gamma M,Q_1)$,
$(X_1,Y_1)$) and there exists $(x,{({f_1},  g_1)})\in\Hom_R((P_0,0)
\oplus(Q_0\otimes_\Gamma M, Q_0)$,
${(X_1, Y_1)})$ with $f_1\in\Hom_\Lambda(Q_0\otimes_\Gamma M,X_1)$
and $g_1\in\Hom_\Gamma(Q_0,Y_1)$ such that the following diagram
$$\xymatrix@C=15pt{({P_1,0}) \oplus{(Q_1\otimes_\Gamma M,Q_1)}
\ar[rr]^{\sigma}\ar[d]_{(0,({h\circ(g\otimes M),g}))}&&{(P_0,0)}
\oplus{(Q_0\otimes_\Gamma M,Q_0)}\ar@{.>}[lld]^{(x,{({f_1},g_1}))}\\
{(X_1,Y_1)}_f&&}$$ commutes. So
$(0, ({h\circ (g\otimes M), g}))=(x,{({f_1},g_1}))\circ\sigma$, and hence
$({h\circ(g\otimes M), g})={({f_1},g_1})\circ b={({f_1},g_1})\circ{({\sigma_Y}
\otimes_\Gamma M,{\sigma_Y})}$
and $g=g_1\circ \sigma_Y$. It implies $Y_1\in D_{\sigma_Y}$.

Conversely, let $(x,y)\in\Hom_R({(P_1,0)}\oplus{(Q_1\otimes_\Gamma M, Q_1)}$,
${(X_1, Y_1)}_{h}$). Write $x={(f_2, 0)}$ and $y={(f_3, g_3)}$
with $f_2\in\Hom_\Lambda(P_1, X_1)$, $f_3\in\Hom_\Lambda(Q_1\otimes_\Gamma M, X_1)$
and $g_3\in\Hom_\Gamma(Q_1, Y_1)$. Then we have the following commutative diagram
$$\xymatrix{Q_1\otimes_\Gamma M\ar[d]_{g_3\otimes M}\ar[rr]^{\id}&&Q_1\otimes_\Gamma M\ar[d]^{f_3}\\
Y_1\otimes_\Gamma M\ar[rr]^{h}&&X_1.}$$
and so $f_3=h\circ (g_3\otimes M)$. Since $X_1\in D_{\sigma_X}$ and $Y_1\in D_{\sigma_Y}$,
there exist $f'_2\in\Hom_\Lambda(P_0,X_1)$ and $g'_3\in\Hom_\Gamma(Q_0, Y_1)$ such that
$f_2=f'_2\circ \sigma_X$ and $g_3=g'_3\circ \sigma_Y$.
Since ${(h\circ (g'_3\otimes M), g'_3)}\in\Hom_{R}({(Q_0\otimes_\Gamma M, Q_0)},{(X_1, Y_1)})$
and the following equalities hold
\begin{equation*}
\begin{split}
\left({(f'_2, 0)}, {(h\circ (g'_3\otimes M), g'_3)}\right)\circ\sigma
&=\left({(f'_2, 0)}, {(h\circ (g'_3\otimes M), g'_3)}\right)\circ\left(\smallmatrix a &0\\0& b\endsmallmatrix\right)\\
&=\left({(f'_2, 0)}\circ a, {(h\circ (g'_3\otimes M), g'_3)}\circ b\right)\\
&=\left({(f'_2, 0)}\circ {(\sigma_X, 0)}, {(h\circ (g'_3\otimes M), g'_3)}\circ{(\sigma_Y\otimes M, \sigma_Y)}\right)\\
&=\left({({f'_2\circ\sigma_X}, 0)}, {\left({h\circ ((g'_3\circ\sigma_Y}\right)\otimes M), {g'_3\circ\sigma_Y}) }\right)\\
&=\left({({f_2}, 0)}, {(h\circ ({g_3}\otimes M), {g_3}) }\right)\\
&=\left({({f_2}, 0)}, {{(f_3}, {g_3}) }\right)\\
&=(x,y),
\end{split}
\end{equation*}
we have ${(X_1, Y_1)}_{h}$$\in D_{\sigma}$.

The assertions (2), (3) and  (4) follow directly from (1).
\end{proof}

Let $I$ be a set and $\{{(X_i, Y_i)}_{f_i} \}_{i\in I}\in \Mod R$ with
all $X_i\in\Mod \Lambda$ and $Y_i\in\Mod \Gamma$. Since the tensor functor commutes
with direct sums, we have
$\bigoplus_{i\in I}{(X_i, Y_i)}_{f_i}\cong$
$( \bigoplus_{i\in I}X_i ,\bigoplus_{i\in I}Y_i)_{{\bigoplus_{i\in I}f_i}}$.

\begin{lemma}\label{3.2}
$D_\sigma$ is a torsion class if and only if both $D_{\sigma_X}$ and $D_{\sigma_Y}$ are torsion classes.
\end{lemma}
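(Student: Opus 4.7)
The plan is to reduce the statement to closure under direct sums, which is the only nontrivial condition for the torsion class property here. Indeed, by \cite[Lemma 3.6(1)]{AMV} (as recalled in the remark after Definition \ref{2.1}), each of $D_\sigma$, $D_{\sigma_X}$, $D_{\sigma_Y}$ is automatically closed under images and extensions, so in each case being a torsion class amounts to being closed under direct sums. Hence I only need to prove that $D_\sigma$ is closed under direct sums if and only if both $D_{\sigma_X}$ and $D_{\sigma_Y}$ are.

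For the forward direction I would fix a family $\{X_i\}_{i\in I}$ in $D_{\sigma_X}$ and use Lemma \ref{3.1}(2) to lift it to the family $\{(X_i,0)\}_{i\in I}$ in $D_\sigma$; assuming $D_\sigma$ is closed under direct sums gives $(\bigoplus_i X_i,0)\cong\bigoplus_i(X_i,0)\in D_\sigma$, and Lemma \ref{3.1}(1) then yields $\bigoplus_i X_i\in D_{\sigma_X}$. The argument for $D_{\sigma_Y}$ is symmetric, applying Lemma \ref{3.1}(3) to the family $\{(0,Y_i)\}_{i\in I}$.

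For the backward direction, suppose both $D_{\sigma_X}$ and $D_{\sigma_Y}$ are closed under direct sums, and let $\{(X_i,Y_i)_{f_i}\}_{i\in I}$ be a family in $D_\sigma$. By Lemma \ref{3.1}(1) each $X_i$ lies in $D_{\sigma_X}$ and each $Y_i$ in $D_{\sigma_Y}$, so by hypothesis $\bigoplus_i X_i\in D_{\sigma_X}$ and $\bigoplus_i Y_i\in D_{\sigma_Y}$. Since $-\otimes_\Gamma M$ commutes with direct sums, we have the identification
$$\bigoplus_{i\in I}(X_i,Y_i)_{f_i}\;\cong\;\Bigl(\bigoplus_{i\in I}X_i,\;\bigoplus_{i\in I}Y_i\Bigr)_{\bigoplus_{i\in I}f_i},$$
as noted just before the statement; invoking Lemma \ref{3.1}(1) once more in the opposite direction places this direct sum in $D_\sigma$.

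I do not expect any serious obstacle: the heart of the argument is the equivalence in Lemma \ref{3.1}(1) together with the fact that the tensor functor $-\otimes_\Gamma M$ preserves direct sums, so that the triple describing the direct sum of $R$-modules decomposes componentwise. The only delicate point to double check is that for the backward direction I use the full ``if and only if'' of Lemma \ref{3.1}(1), not merely parts (2)--(4), so as to handle arbitrary structure maps $f_i$ rather than only the cases where the module is concentrated in one component or has $\id$ as its structure map.
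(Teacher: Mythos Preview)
Your proposal is correct and follows the paper's approach: both reduce the question to closure under direct sums via \cite[Lemma 3.6(1)]{AMV}, and the forward direction is handled identically through Lemma \ref{3.1}(2),(3) and then (1). For the backward direction you are in fact slightly more efficient, invoking the converse implication of Lemma \ref{3.1}(1) directly on $(\bigoplus_i X_i,\bigoplus_i Y_i)_{\bigoplus_i f_i}$, whereas the paper redoes that verification by an explicit diagram chase for the direct-sum module.
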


\begin{proof}
Suppose that $D_{\sigma}$ is a torsion class.
Let $\{X_i\}_{i\in I}$ be a family of modules in $D_{\sigma_X}$. Then ${(X_i, 0)}\in D_{\sigma}$
for any $i\in I$ by Lemma 3.1(2). So ${({\bigoplus_{i\in I}X_i}, 0)}\cong\bigoplus_{i\in I}{(X_i, 0)}\in D_{\sigma}$,
and hence $\bigoplus_{i\in I}X_i\in D_{\sigma_X}$ by Lemma 3.1(1). Thus $D_{\sigma_{X}}$ is a torsion class
by \cite[Lemma 3.6(1)]{AMV}. Similarly,  $D_{\sigma_{Y}}$ is also a torsion class.

Conversely, suppose that both $D_{\sigma_X}$ and $D_{\sigma_Y}$ are torsion classes. Let
$\{{(X_i, Y_i)}_{f_i} \}_{i\in I}$ be a family of modules in $D_{\sigma}$ with $X_i\in\Mod \Lambda$ and $Y_i\in\Mod \Gamma$.
Then $X_i\in D_{\sigma_{X}}$ and $Y_i\in D_{\sigma_{Y}}$ for any $i\in I$ by Lemma 3.1(1).
Hence $\bigoplus_{i\in I}X_i\in D_{\sigma_{X}}$ and
$\bigoplus_{i\in I}Y_i\in D_{\sigma_{Y}}$. Let $(x,y)\in\Hom_R$$({(P_1, 0)}\oplus{(Q_1\otimes_\Gamma M, Q_1)},$
$( \bigoplus_{i\in I}X_i,\bigoplus_{i\in I}Y_i)_{{\bigoplus_{i\in I}f_i}}$).
Write $x={(f_1, 0)}$ and $y={(f_2, g_2)}$ with $f_1\in\Hom_\Lambda(P_1, \bigoplus_{i\in I}X_i )$,
$f_2\in\Hom_\Lambda(Q_1\otimes_\Gamma M, \bigoplus_{i\in I}X_i)$ and $g_2\in\Hom_\Gamma(Q_1, \bigoplus_{i\in I}Y_i)$.
Then we have the following commutative diagram
$$\xymatrix{Q_1\otimes_\Gamma M\ar[d]_{g_2\otimes M}\ar[rr]^{\id}&&Q_1\otimes_\Gamma M\ar[d]^{f_2}\\
(\bigoplus_{i\in I}Y_i)\otimes_\Gamma M\ar[rr]^{\bigoplus_{i\in I}f_i}&&\bigoplus_{i\in I}X_i,\\}$$
and so $f_2=(\bigoplus_{i\in I}f_i)\circ(g_2\otimes M)$. Since $\bigoplus_{i\in I}X_i\in D_{\sigma_{X}}$
and $\bigoplus_{i\in I}Y_i\in D_{\sigma_{Y}}$, there exist
$f'_1\in\Hom_\Lambda(P_0,\bigoplus_{i\in I}X_i)$ and $g'_2\in\Hom_\Gamma(Q_0, \bigoplus_{i\in I}Y_i)$
such that $f_1=f'_1\circ\sigma_X$  and $g_2=g'_2\circ\sigma_Y$.

Set $g'_1:=(\bigoplus_{i\in I}f_i)\circ(g'_2\otimes M)$. Then $(g'_1, g'_2)$$\in\Hom_R$(${(Q_0\otimes_\Gamma M, Q_0)},$
$( \bigoplus_{i\in I}X_i,\bigoplus_{i\in I}Y_i))$.
Since the following equalities hold
\begin{equation*}
\begin{split}
\left({(f'_1, 0)},{(g'_1, g'_2)}\right)\circ\sigma
&=\left({(f'_1, 0)},{(g'_1, g'_2)}\right)\circ(\smallmatrix a &0\\0& b\endsmallmatrix)\\
&=\left({(f'_1,0)}\circ a,{(g'_1,g'_2)}\circ b\right)\\
&=\left({(f'_1, 0)}\circ {(\sigma_X, 0)},{(g'_1, g'_2)}\circ{(\sigma_Y\otimes M, \sigma_Y)}\right)\\
&=\left({{(f'_1\circ\sigma_X},0)},{{(g'_1\circ(\sigma_Y\otimes M)}, {g'_2\circ\sigma_Y}) }\right)\\
&=\left({{(f_1}, 0)},{{((\bigoplus_{i\in I}f_i)\circ(g'_2\otimes M)\circ(\sigma_Y\otimes M)}, {g_2} )}\right)\\
&=\left({{(f_1}, 0)},{{((\bigoplus_{i\in I}f_i)\circ(g'_2\circ\sigma_Y)\otimes M}, {g_2}) }\right)\\
&=\left({{(f_1}, 0)},{{((\bigoplus_{i\in I}f_i)\circ(g_2\otimes M)}, {g_2}) }\right)\\
&=\left({({f_1}, 0)},{{(f_2}, {g_2}) }\right)\\
&=(x,y),
\end{split}
\end{equation*}
we have $\bigoplus_{i\in I}{(X_i, Y_i)}_{f_i}\cong$
$( \bigoplus_{i\in I}X_i,\bigoplus_{i\in I}Y_i)_{{\bigoplus_{i\in I}f_i}}\in D_{\sigma}$.
Thus $D_{\sigma}$ is a torsion class by \cite[Lemma 3.6(1)]{AMV} again.
\end{proof}

As an immediate consequence of Lemmas \ref{3.1} and \ref{3.2}, we get the following result.

\begin{proposition}\label{3.3}
Let $X\in\Mod\Lambda$ with a projective presentation $\sigma_{X}$
and $Y\in\Mod\Gamma$ with a projective presentation $\sigma_{Y}$.
Then $(X, 0)$$\oplus$$(Y\otimes_\Gamma M,Y)$ is a partial silting $R$-module
with respect to $\sigma$ if and only if the following conditions are satisfied.
\begin{enumerate}
\item[(i)] $X$ is a partial silting $\Lambda$-module with respect to $\sigma_X$.
\item[(ii)] $Y$ is a partial silting $\Gamma$-module with respect to $\sigma_Y$.
\item[(iii)] $Y\otimes_\Gamma M\in D_{\sigma_{X}}$.
\end{enumerate}
\end{proposition}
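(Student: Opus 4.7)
The plan is to match the definition of partial silting against conditions (i)--(iii) term by term, using Lemmas \ref{3.1} and \ref{3.2}. Being partial silting with respect to $\sigma$ means exactly that (a) $D_{\sigma}$ is a torsion class in $\Mod R$, and (b) $(X,0)\oplus(Y\otimes_\Gamma M,Y)\in D_\sigma$. The first condition is immediately handled by Lemma \ref{3.2}: it is equivalent to $D_{\sigma_X}$ and $D_{\sigma_Y}$ both being torsion classes in $\Mod\Lambda$ and $\Mod\Gamma$ respectively. Together with the additional memberships $X\in D_{\sigma_X}$ and $Y\in D_{\sigma_Y}$ supplied by (b), these become (i) and (ii).

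For (b), I would first note that under condition (a) the torsion class $D_\sigma$ is closed under direct summands (a summand is an image of the direct sum, and every $D_\tau$ is closed under images by \cite[Lemma 3.6(1)]{AMV}). Therefore $(X,0)\oplus(Y\otimes_\Gamma M,Y)$ lies in $D_\sigma$ if and only if each of the summands $(X,0)$ and $(Y\otimes_\Gamma M,Y)_{\id}$ belongs to $D_\sigma$. Applying Lemma \ref{3.1}(1) to the first summand recovers exactly $X\in D_{\sigma_X}$ (the condition $0\in D_{\sigma_Y}$ is vacuous), while Lemma \ref{3.1}(1) applied to $(Y\otimes_\Gamma M,Y)_{\id}$ is equivalent to the pair of conditions $Y\otimes_\Gamma M\in D_{\sigma_X}$ and $Y\in D_{\sigma_Y}$. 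The first of these is precisely condition (iii), and the second is folded back into (ii). For the ``only if'' direction this is just a rereading of the lemmas; for the ``if'' direction, the explicit implications given by Lemma \ref{3.1}(2) and Lemma \ref{3.1}(4) provide the two summand-wise memberships, and closure of $D_\sigma$ under direct sums (available once (a) holds, which is granted by Lemma \ref{3.2} from (i) and (ii)) yields membership of the direct sum.

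Putting it all together, conditions (a) and (b) for $(X,0)\oplus(Y\otimes_\Gamma M,Y)$ with respect to $\sigma$ are equivalent to the conjunction: $D_{\sigma_X}$ a torsion class with $X\in D_{\sigma_X}$; $D_{\sigma_Y}$ a torsion class with $Y\in D_{\sigma_Y}$; and $Y\otimes_\Gamma M\in D_{\sigma_X}$. These are exactly (i), (ii), (iii). The argument is essentially bookkeeping on top of Lemmas \ref{3.1} and \ref{3.2}; I do not foresee any real obstacle beyond the small point that torsion classes are closed under direct summands, which ensures the decomposition in step (b) is valid in both directions.
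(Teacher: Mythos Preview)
Your proof is correct and follows the same route as the paper, which simply records the result as an immediate consequence of Lemmas \ref{3.1} and \ref{3.2}. One small simplification: you invoke condition (a) to obtain closure of $D_\sigma$ under direct summands and finite direct sums, but in fact any $D_\tau$ is closed under these automatically (since $\Hom(\tau,-)$ is additive, $\Hom(\tau,A\oplus B)$ is epic if and only if both $\Hom(\tau,A)$ and $\Hom(\tau,B)$ are), so that step does not actually depend on the torsion class hypothesis.
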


The following is the main result in this section.

\begin{theorem}\label{3.4}
Let $X\in\Mod\Lambda$ and $Y\in\Mod\Gamma$. Then $(X, 0)$$\oplus$$(Y\otimes_\Gamma M,Y)$ is a silting
$R$-module if and only if the following conditions are satisfied.
\begin{enumerate}
\item[(i)] $X$ is a silting $\Lambda$-module.
\item[(ii)] $Y$ is a silting $\Gamma$-module.
\item[(iii)] $Y\otimes_\Gamma M\in \Gen X$.
\end{enumerate}
\end{theorem}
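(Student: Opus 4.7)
The plan is to reduce silting to two ingredients: partial silting (already characterized in Proposition \ref{3.3}) plus the equality $\Gen T = D_\sigma$ from Definition \ref{2.1}(2), where $T := (X,0)\oplus(Y\otimes_\Gamma M, Y)_{\id}$ and $\sigma$ is the projective presentation built just before Lemma \ref{3.1}. Since the inclusion $\Gen T \subseteq D_\sigma$ holds automatically once $T \in D_\sigma$, the real content of silting is the reverse inclusion $D_\sigma \subseteq \Gen T$. Throughout I will repeatedly use Lemma \ref{3.1} to shuttle between membership in $D_\sigma$ and membership in $D_{\sigma_X}, D_{\sigma_Y}$.

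For the forward direction, assume $T$ is silting, so $\Gen T = D_\sigma$. Proposition \ref{3.3} immediately gives that $X, Y$ are partial silting (with $\Gen X \subseteq D_{\sigma_X}$, $\Gen Y \subseteq D_{\sigma_Y}$) and $Y\otimes_\Gamma M \in D_{\sigma_X}$. To upgrade $X$ to a silting module, pick any $X_1 \in D_{\sigma_X}$; by Lemma \ref{3.1}(2), $(X_1, 0) \in D_\sigma = \Gen T$, so there is an epimorphism $T^{(K)} \twoheadrightarrow (X_1, 0)$ for some index set $K$. Its $\Gamma$-component is forced to be zero, and then the commutativity of the square defining a morphism in $\mathcal{C}_R$ forces the $\Lambda$-component to vanish on the copies of $Y\otimes_\Gamma M$. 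Hence $X_1$ is a quotient of $X^{(K)}$, giving $X_1 \in \Gen X$ and therefore $\Gen X = D_{\sigma_X}$. The same argument using Lemma \ref{3.1}(3) yields $Y$ silting. Finally, $(Y\otimes_\Gamma M, Y)_{\id}$ is a direct summand of $T \in D_\sigma$, so Lemma \ref{3.1}(1) places $Y\otimes_\Gamma M$ in $D_{\sigma_X} = \Gen X$, which is (iii).

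For the converse, assume (i)--(iii). From $\Gen X = D_{\sigma_X}$ and (iii), condition (iii) of Proposition \ref{3.3} is satisfied, so $T$ is partial silting; it remains to show $D_\sigma \subseteq \Gen T$. Given $(X_1, Y_1)_h \in D_\sigma$, Lemma \ref{3.1}(1) furnishes $X_1 \in D_{\sigma_X} = \Gen X$ and $Y_1 \in D_{\sigma_Y} = \Gen Y$, so choose surjections $\alpha_0\colon X^{(I)} \twoheadrightarrow X_1$ and $\beta_0\colon Y^{(J)} \twoheadrightarrow Y_1$. I would then define a morphism
$$(X^{(I)}, 0) \oplus \bigl((Y\otimes_\Gamma M)^{(J)}, Y^{(J)}\bigr)_{\id} \longrightarrow (X_1, Y_1)_h$$
in $\mathcal{C}_R$ whose $\Gamma$-component is $\beta_0$ and whose $\Lambda$-component is $\alpha_0$ on $X^{(I)}$ and $h\circ(\beta_0 \otimes M)$ on $(Y\otimes_\Gamma M)^{(J)}$. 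The defining square commutes by construction, and both components are surjective (the $\Lambda$-component already surjects via $\alpha_0$), so this is an epimorphism in $\Mod R$ with source in $\Add T$. Hence $(X_1, Y_1)_h \in \Gen T$, finishing the proof.

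The main technical obstacle is the backward direction: one has to choose the $\Lambda$-component of the epimorphism so that it respects the structure map $h$, rather than merely combining $\alpha_0$ and some independent map on $(Y\otimes_\Gamma M)^{(J)}$. The key idea is that fixing the $\Gamma$-component to be $\beta_0$ automatically dictates, via the commutativity constraint in $\mathcal{C}_R$, what the restriction of the $\Lambda$-component to the ``$M$-part'' of the source must be, namely $h\circ(\beta_0\otimes M)$; everything else is organized bookkeeping with Lemmas \ref{3.1}, \ref{3.2} and Proposition \ref{3.3}.
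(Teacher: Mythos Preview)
Your argument is correct. The forward direction is essentially the paper's: both of you pass from $(X_1,0)\in\Gen T$ to $X_1\in\Gen X$ by observing that any morphism $(Y\otimes_\Gamma M,Y)_{\id}\to(X_1,0)$ must vanish (the $\Gamma$-component is zero, and the commutativity square then kills the $\Lambda$-component), and likewise for $Y$. Both proofs also make the same implicit identification, namely that ``$T$ silting'' is taken to mean silting with respect to the specific diagonal presentation $\sigma$ built from $\sigma_X,\sigma_Y$; the paper writes this hypothesis explicitly at the start of its proof, and you assume it when you write $\Gen T=D_\sigma$.

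The converse, however, takes a genuinely different route. The paper invokes Proposition~\ref{2.2}: it lifts the approximation sequences $\Lambda\xrightarrow{\phi}T^1\to T^2\to 0$ and $\Gamma\xrightarrow{\psi}E^1\to E^2\to 0$ for $X$ and $Y$ to a single sequence over $R$, and then checks by hand that the resulting map $(\Lambda,0)\oplus(M,\Gamma)\to(T^1,0)\oplus(E^1\otimes_\Gamma M,E^1)$ is a left $D_\sigma$-approximation. You bypass Proposition~\ref{2.2} entirely: using only Lemma~\ref{3.1}(1) and the equalities $D_{\sigma_X}=\Gen X$, $D_{\sigma_Y}=\Gen Y$, you write down an explicit epimorphism from a module in $\Add T$ onto an arbitrary $(X_1,Y_1)_h\in D_\sigma$, with the key observation that the $\Lambda$-component on the $(Y\otimes_\Gamma M)^{(J)}$-summand is forced to be $h\circ(\beta_0\otimes M)$ by the compatibility constraint in $\mathcal{C}_R$. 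Your approach is shorter and more elementary; the paper's has the advantage of producing the approximation sequence for $R$ explicitly in terms of those for $\Lambda$ and $\Gamma$, which is extra structural information one might want elsewhere.
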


\begin{proof}
Let $(X, 0)$$\oplus$$(Y\otimes_\Gamma M, Y)$ be a silting $R$-module with respect to $\sigma$. Then
$$D_{\sigma}=\Gen\left({(X, 0)}\oplus{(Y\otimes_\Gamma M, Y)}\right)=
\Gen{(X, 0)}\oplus\Gen{(Y\otimes_\Gamma M, Y)}.$$
Since $X$ is partial silting by Proposition \ref{3.3}, we have $\Gen X\subseteq D_{\sigma_X}$.
Let $X_1\in  D_{\sigma_X}$. Then ${(X_1,0)}\in D_{\sigma}$. If ${(X_1, 0)}$ has a direct summand
${(X'_1, 0)}$ in $\Gen{(Y\otimes_\Gamma M, Y)}$, then we have the following commutative diagram
with exact columns
$$\xymatrix{(Y\otimes_\Gamma M)^{(I)}\ar[d]_{0}\ar[rr]^{\id}&&(Y\otimes_\Gamma M)^{(I)}\ar[d]^\alpha\\
0\ar[rr]\ar[d]&&X'_1\ar[d]\\
0&&0,\\}$$
and so $X'_1=0$. Thus ${(X_1, 0)}\in \Gen{(X, 0)}$ and $X_1\in \Gen X$. It follows that
$\Gen X=D_{\sigma_X}$ and $X$ is a silting $\Lambda$-module. By Proposition \ref{3.3} again,
we have $Y\otimes_\Gamma M\in D_{\sigma_{X}}=\Gen X$. Similarly, we have that $Y$ is a silting $\Gamma$-module.

Conversely, let $X$ be a silting $\Lambda$-module with respect to $\sigma_X$ and $Y$  a silting $\Gamma$-module
with respect to $\sigma_Y$ such that $Y\otimes_\Gamma M\in \Gen X$. Then
$(X, 0)$$\oplus$$(Y\otimes_\Gamma M, Y)$ is a partial silting module
with respect to $\sigma$ by Proposition \ref{3.3}. According to Proposition 2.2, we have the following exact sequences
$$\Lambda \stackrel{\phi}{\longrightarrow}T^1 {\rightarrow} T^2{\rightarrow} 0$$
and
$$\Gamma \stackrel{\psi}{\longrightarrow}E^1 {\rightarrow} E^2{\rightarrow} 0$$
with $T^{1}, T^2\in \Add X$ and $E^{1}, E^2\in \Add Y$ such that $\phi$ is a left $D_{\sigma_X}$-approximation
and $\psi$ is a left $D_{\sigma_Y}$-approximation. Set
$$a':={({\phi}, 0)},\  b':={({\psi}\otimes_\Gamma M, {\psi})}\ \text{and}\
\alpha:=\left(\begin{array}{cc}a' & 0 \\ 0 & b'\end{array}\right).$$
Then we get the following exact sequence
$${(\Lambda, 0)}\oplus{(M,\Gamma)} \stackrel{\alpha}{\longrightarrow}
{(T^1,0)}\oplus{(E^1\otimes_\Gamma M, E^1)}{\rightarrow}
{(T^2, 0)}\oplus{(E^2\otimes_\Gamma M,E^2)}{\rightarrow} 0.$$
Clearly, both $(T^1, 0)$$\oplus$$(E^1\otimes_\Gamma M, E^1)$ and
$(T^2, 0)$$\oplus$$(E^2\otimes_\Gamma M, E^2)$ belong to
$\Add({(X, 0)}$$\oplus$${(Y\otimes_\Gamma M,Y)})$.

Let ${(X_1, Y_1)}_{h}\in D_{\sigma}$
and $(x,y)\in\Hom_R({(\Lambda, 0)}\oplus{(M, \Gamma)}, {(X_1, Y_1)})$.
By Lemma \ref{3.1}(1), we have $X_1\in D_{\sigma_X}$ and $Y_1\in D_{\sigma_Y}$.
Write $x={(f_1, 0)}$ and $y={(f_2,  g_2)}$, we have $f_2=h\circ(g_2\otimes M)$.
Since $\phi$ is a left $D_{\sigma_X}$-approximation and $\psi$ is a left $D_{\sigma_Y}$-approximation,
there exist $f'_1\in\Hom_\Lambda(T^1, X_1)$ and $g'_2\in\Hom_\Gamma(E^1, Y_1)$ such that $f_1=f'_1\circ\phi$
and $g_2=g'_2\circ\psi$.
Take $f_3:=h\circ(g'_2\otimes M)(\in\Hom_{\Lambda}(E^1\otimes_\Gamma M, X_1))$.
Since the following equalities hold
\begin{equation*}
\begin{split}
\left({(f'_1, 0)}, {(f_3, g'_2)}\right)\circ\alpha
&=\left({(f'_1, 0)}, {(f_3, g'_2)}\right)\circ(\smallmatrix a' &0\\0& b'\endsmallmatrix)\\
&=\left({(f'_1, 0)}\circ a', {(f_3, g'_2)}\circ b'\right)\\
&=\left({(f'_1, 0)}\circ {(\phi, 0)}, {(f_3, g'_2)}\circ{(\psi\otimes M, \psi)}\right)\\
&=\left({{(f'_1\circ\phi}, 0)}, {{(f_3\circ(\psi\otimes M)}, {g'_2\circ\psi} )}\right)\\
&=\left({({f_1}, 0)}, {{(h\circ(g'_2\otimes M)\circ(\psi\otimes M)}, {g_2} )}\right)\\
&=\left({{(f_1}, 0)}, {{(h\circ(g'_2\circ\psi)\otimes M}, {g_2} )}\right)\\
&=\left({{(f_1}, 0)}, {{(h\circ(g_2\otimes M)}, {g_2}) }\right)\\
&=\left({{(f_1}, 0)}, {({f_2},  {g_2}) }\right)\\
&=(x, y),
\end{split}
\end{equation*}
that is, the following diagram
$$\xymatrix@C=15pt{{(\Lambda, 0)} \oplus{(M, \Gamma)}\ar[rr]^{\alpha~~~~~~}\ar[d]_{(x,~y)}&
&{(T^{1},0)} \oplus{(E^{1}\otimes_\Gamma M, E^{1})}\ar@{.>}[lld]^{({(f'_1,~ 0)},~{(f_3, ~g'_2)})}\\
{(X_1, Y_1)}&&}$$
commutes, we have that $\alpha$ is a left $D_{\sigma}$-approximation. It follows from Proposition \ref{2.2} that
$(X,0)$$\oplus$$(Y\otimes_\Gamma M,Y)$ is a silting $R$-module.
\end{proof}

\begin{remark}\label{3.5}
{\rm The sufficiency of the above theorem can be obtained by considering a special ring extension.
Take $S=\Lambda\times\Gamma$. Then we have a split surjective morphism $R\to S$ whose kernel is $_SM_S$.
Of course, we have $(X\oplus Y)\otimes R\cong $ $(X,0)$$\oplus$$(Y\otimes_\Gamma M, Y)$ and $R_S\cong S\oplus M$.
If $X$ is a silting $\Lambda$-module and  $Y$ is a silting $\Gamma$-module, then $X\oplus Y$ is a silting $S$-module.
It follows from \cite[Theorem 4.9]{AMV} that $\sigma_X$  and $\sigma_Y$ are 2-term silting complexes. Hence
$(X\oplus Y)\otimes R$ is a silting $R$-module if and only if $\sigma$ is a 2-term silting complex, and if and only if
$\Hom_R(R_S, (X\oplus Y)\otimes R)\in \Gen(X\oplus Y)$ by \cite[Theorem 2.2]{B}. Since
$$\Hom_R(R_S, (X\oplus Y)\otimes R)\cong  (X\oplus Y)\otimes R_S\cong (X\oplus Y)\oplus (X\oplus Y)\otimes_SM,$$
we have that $\Hom_R(R_S, (X\oplus Y)\otimes R)\in \Gen(X\oplus Y)$ if and only if $(X\oplus Y)\otimes_SM\in \Gen(X\oplus Y)$,
that is, $Y\otimes_\Gamma M\in \Gen X$ since $X\otimes_SM=0$.}
\end{remark}

\begin{corollary}\label{3.6}
Let $X\in\Mod\Lambda$ and $Y\in\Mod\Gamma$. Then
\begin{enumerate}
\item[(1)] $(X,0)$ is a silting $R$-module if and only if $X$ is a
silting $\Lambda$-module.
\item[(2)] ${(\Lambda,0)}\oplus{(Y\otimes_\Gamma M,Y})$ is a
silting $R$-module if and only if $Y$ is a silting $\Gamma$-module.
\end{enumerate}
\end{corollary}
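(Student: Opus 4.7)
Both parts of the corollary should follow directly from Theorem~\ref{3.4} by a trivial specialization---take $Y=0$ in~(1) and $X=\Lambda$ in~(2)---once one observes that both $0$ and $\Lambda$ are themselves silting modules in the sense of Definition~\ref{2.1}.

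For (1), with $Y=0$ we have $(Y\otimes_\Gamma M, Y)=0$, so $(X,0)\oplus(Y\otimes_\Gamma M,Y)=(X,0)$, and condition~(iii) of Theorem~\ref{3.4} degenerates to $0\in\Gen X$, which is automatic. It remains to verify that $0$ is a silting $\Gamma$-module; I would do this via the \emph{non-minimal} projective presentation $\sigma_Y\colon\Gamma\to 0$, for which the induced map $\Hom_\Gamma(0,B)\to\Hom_\Gamma(\Gamma,B)\cong B$ is epic iff $B=0$, giving $D_{\sigma_Y}=\{0\}=\Gen 0$. Theorem~\ref{3.4} then yields the biconditional $(X,0)$ silting $\Leftrightarrow$ $X$ silting.

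For (2), with $X=\Lambda$, condition~(iii) becomes $Y\otimes_\Gamma M\in\Gen\Lambda=\Mod\Lambda$, again automatic. I would verify that $\Lambda$ is a silting $\Lambda$-module via $\sigma_X\colon 0\to\Lambda$: any morphism into the zero module is epic, so $D_{\sigma_X}=\Mod\Lambda=\Gen\Lambda$. Theorem~\ref{3.4} then delivers the desired equivalence. The only step that warrants pause is picking the correct non-minimal projective presentations as witnesses: the minimal ones (namely $0\to 0$ and $\id_\Lambda\colon\Lambda\to\Lambda$) fail to satisfy $\Gen T=D_\sigma$ in general, so one must recognize $\Gamma\to 0$ and $0\to\Lambda$ as the legitimate choices before the trivial specializations of Theorem~\ref{3.4} can be invoked.
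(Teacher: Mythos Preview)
Your argument is correct and follows the same route as the paper: part~(2) is exactly the paper's proof (invoke Theorem~\ref{3.4} with $X=\Lambda$, noting $\Lambda$ is silting and $\Gen\Lambda=\Mod\Lambda$), and for part~(1) the paper simply says ``It is clear,'' which your specialization $Y=0$ makes explicit. One small inaccuracy in your closing remark: the \emph{minimal} projective presentation of $\Lambda$ is already $0\to\Lambda$, not $\id_\Lambda\colon\Lambda\to\Lambda$, so no ``non-minimal'' choice is needed there; only for $Y=0$ do you genuinely need the non-minimal presentation $\Gamma\to 0$ rather than $0\to 0$.
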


\begin{proof}
(1) It is clear.

(2) Since $\Lambda$ is a silting $\Lambda$-module
 and  $\Gen\Lambda=\Mod \Lambda$,  the assertion follows immediately from Theorem \ref{3.4}.
\end{proof}

Recall from \cite{CT} that a module $X\in\Mod\Lambda$ is called {\it tilting} if $\Gen X=\{N\in\Mod \Lambda\mid\Ext_\Lambda^1(X,N)=0\}$;
or equivalently, if $X$ satisfies the following conditions.
\begin{enumerate}
\item[(i)] The projective dimension of $X$ is at most one.
\item[(ii)] $\Ext_\Lambda^1(X,X^{(I)})=0$ for any set $I$.
\item[(iii)] There exists an exact sequence
$$0\rightarrow \Lambda\rightarrow T_0\rightarrow T_1\rightarrow 0$$
in $\Mod\Lambda$ with $T^0,T^1\in\Add X$.
\end{enumerate}
It follows from \cite[Proposition 3.13(1)]{AMV} that a module $X\in\Mod\Lambda$ is tilting if and only if it is a silting
with respect to a monomorphic projective presentation.

\begin{corollary}\label{3.7}
Let $X\in\Mod\Lambda$ and $Y\in\Mod \Gamma$ be silting with monomorphic projective presentations $\sigma_{X}$ and
$\sigma_{Y}$ respectively. If $\sigma_Y\otimes_\Gamma M$ is monic and $Y\otimes_\Gamma M\in \Gen X$,
then  $(X, 0)$$\oplus$$(Y\otimes_\Gamma M, Y)$ is a tilting $R$-module.
\end{corollary}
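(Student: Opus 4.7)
The plan is to combine Theorem \ref{3.4} with the characterization (recalled immediately before the corollary, namely \cite[Proposition 3.13(1)]{AMV}) that a silting module is tilting precisely when it admits a monomorphic projective presentation. Since $X$ is a silting $\Lambda$-module, $Y$ is a silting $\Gamma$-module and $Y\otimes_\Gamma M\in\Gen X$, Theorem \ref{3.4} already yields that $(X,0)\oplus(Y\otimes_\Gamma M,Y)$ is a silting $R$-module. Moreover, the discussion preceding Lemma \ref{3.1} produces an explicit projective presentation
$$\sigma=\left(\begin{array}{cc} a & 0\\ 0 & b\end{array}\right),\qquad a=(\sigma_X,0),\qquad b=(\sigma_Y\otimes_\Gamma M,\sigma_Y),$$
with respect to which it is silting. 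So it suffices to check that this particular $\sigma$ is injective.

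For this I would invoke the componentwise characterization of exactness in $\Mod R\simeq\mathcal C_R$ recalled in Section~2.1: a morphism $(\alpha,\beta)$ in $\mathcal C_R$ is monic if and only if both $\alpha$ and $\beta$ are monic. The $\Gamma$-component of $\sigma$ is just $\sigma_Y$, which is monic by hypothesis. The $\Lambda$-component is the direct sum $\sigma_X\oplus(\sigma_Y\otimes_\Gamma M)$, whose two summands are monic: the first by the assumption that $\sigma_X$ is monomorphic, and the second by the explicit assumption that $\sigma_Y\otimes_\Gamma M$ is monic. Hence $\sigma$ is a monomorphic projective presentation, and applying \cite[Proposition 3.13(1)]{AMV} finishes the proof.

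I do not anticipate a real obstacle: once Theorem \ref{3.4} is available, the whole argument reduces to bookkeeping about componentwise exactness in $\mathcal C_R$. The one conceptual point worth flagging is why the extra hypothesis "$\sigma_Y\otimes_\Gamma M$ is monic" is needed at all: it is precisely what promotes a monomorphic $\Gamma$-presentation of $Y$ to a monomorphic $\Lambda$-presentation after tensoring with $M$, which would otherwise fail to hold in general (though it is automatic if $_\Gamma M$ is flat, which explains the link with Theorem \ref{3.8}).
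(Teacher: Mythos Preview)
Your proof is correct and follows essentially the same approach as the paper. The paper's own proof is the single sentence ``It follows that $\sigma$ is monic when $\sigma_Y\otimes_\Gamma M$ is monic,'' and you have simply spelled out the implicit steps: invoking Theorem~\ref{3.4} (whose proof shows the module is silting with respect to this particular $\sigma$), checking monicity componentwise in $\mathcal C_R$, and applying \cite[Proposition 3.13(1)]{AMV}.
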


\begin{proof}
It follows that $\sigma$ is monic when $\sigma_Y\otimes_\Gamma M$ is monic.
\end{proof}

Let $X_\Lambda$ and $Y_\Gamma$ be tilting modules, and let $\varepsilon^X$ and $\varepsilon^Y$ denote the
counits of the corresponding adjunctions, see \cite[p.535]{CGR}. It was proved in \cite[p.538, Corollary]{CGR}
that if $_\Gamma M$ is flat such that the functor $F=-\otimes_\Gamma M: \Mod \Gamma\rightarrow \Mod \Lambda$
satisfies $F\varepsilon^Y=\varepsilon^X F$, then $(X, 0)$$\oplus$$(Y\otimes_\Gamma M, Y)$ is a
tilting $R$-module.
The following theorem extends this result. The sufficiency of this theorem was obtained independently in
\cite[Theorem 5.2]{YM1986} by considering an epimorphism of rings which is split.

\begin{theorem}\label{3.8}
Let $X\in\Mod\Lambda$ and $Y\in\Mod\Gamma$. If $_\Gamma M$ is flat, then the following statements are equivalent.
\begin{enumerate}
\item[(1)] $(X, 0)$$\oplus$$(Y\otimes_\Gamma M, Y)$ is a tilting $R$-module.
\item[(2)] $X$ is a tilting $\Lambda$-module, $Y$ is a tilting $\Gamma$-module and $Y\otimes_\Gamma M\in \Gen X$.
\end{enumerate}
\end{theorem}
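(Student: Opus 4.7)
The plan is to combine Theorem \ref{3.4} with the characterization recalled just before Corollary \ref{3.7}: a module is tilting if and only if it is silting with respect to a monomorphic projective presentation (\cite[Proposition 3.13(1)]{AMV}). The role of the flatness hypothesis on $_\Gamma M$ is to transfer monicity between the level of $\sigma_Y$ and the level of the induced $R$-module presentation.

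For $(2)\Rightarrow(1)$, I would apply Corollary \ref{3.7} directly. Tilting $X$ and $Y$ provide monomorphic projective presentations $\sigma_X$ and $\sigma_Y$, flatness of $_\Gamma M$ keeps $\sigma_Y\otimes_\Gamma M$ monic, and $Y\otimes_\Gamma M\in\Gen X$ is given; all hypotheses of Corollary \ref{3.7} are then met.

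For $(1)\Rightarrow(2)$, Theorem \ref{3.4} already yields that $X$ and $Y$ are silting and $Y\otimes_\Gamma M\in\Gen X$, so it remains only to upgrade silting to tilting for each of $X$ and $Y$. Since $D_\sigma$ depends only on the cokernel (any two projective presentations of the same module differ by trivially contractible $2$-term summands, which do not affect $D_\sigma$), being silting is a property of the module; consequently a silting module admits a monomorphic silting presentation as soon as its projective dimension is at most one, which by the above characterization makes it tilting. Thus it suffices to show $\pd_\Lambda X\le 1$ and $\pd_\Gamma Y\le 1$. The tilting hypothesis on $(X,0)\oplus(Y\otimes_\Gamma M,Y)$ gives that both of its direct summands have $R$-projective dimension at most $1$. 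For $X$, the exact embedding $(-,0)\colon\Mod\Lambda\to\Mod R$ sends $\Lambda$-projective resolutions of $X$ to $R$-projective resolutions of $(X,0)$ and yields $\Ext^i_R((X,0),(N,0))\cong\Ext^i_\Lambda(X,N)$, so $\pd_\Lambda X=\pd_R(X,0)\le 1$. For $Y$, take a $\Gamma$-projective resolution $\cdots\to Q_1\to Q_0\to Y\to 0$; flatness of $_\Gamma M$ ensures that $\cdots\to(Q_1\otimes_\Gamma M,Q_1)\to(Q_0\otimes_\Gamma M,Q_0)\to(Y\otimes_\Gamma M,Y)\to 0$ is an $R$-projective resolution, whose first syzygy is $(K\otimes_\Gamma M,K)_{\id}$ with $K=\ker(Q_0\to Y)$. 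This syzygy must be $R$-projective; testing against the exact embedding $(0,-)\colon\Mod\Gamma\to\Mod R$ and using the straightforward identification $\Hom_R((K\otimes_\Gamma M,K),(0,N))\cong\Hom_\Gamma(K,N)$, I obtain that $\Hom_\Gamma(K,-)$ is exact, so $K$ is $\Gamma$-projective and $\pd_\Gamma Y\le 1$. The main obstacle is the argument for $Y$: one must leverage flatness of $M$ both to preserve exactness of the resolution and to pin down the precise shape of the first syzygy, and then convert $R$-projectivity into $\Gamma$-projectivity via the second-component embedding.
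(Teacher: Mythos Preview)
Your direction $(2)\Rightarrow(1)$ via Corollary \ref{3.7} is correct and matches the paper.

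For $(1)\Rightarrow(2)$ there is a genuine gap. The assertion that ``$D_\sigma$ depends only on the cokernel'' is false: two projective presentations of the same module need not differ only by contractible summands. For instance, $0\to 0$ and $P\to 0$ are both presentations of the zero module, yet $D_{(0\to 0)}=\Mod\Lambda$ while $D_{(P\to 0)}=\{N:\Hom_\Lambda(P,N)=0\}$. Worse, the conclusion you draw from it --- that a silting module of projective dimension at most one is automatically tilting --- is itself false. Over $\Lambda=kA_2$ the simple projective $S_2=P_2$ is support $\tau$-tilting, hence silting by Proposition \ref{2.7}, and has $\pd=0$; but it is not tilting, since $\Ker\Ext^1_\Lambda(P_2,-)=\Mod\Lambda\neq\Gen P_2$. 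Thus your projective-dimension bridge from silting to tilting cannot carry the argument, and the (in themselves correct) computations of $\pd_\Lambda X$ and $\pd_\Gamma Y$ do not yield the desired conclusion.

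The paper proceeds differently: it never leaves the specific presentation $\sigma$ assembled from $\sigma_X$ and $\sigma_Y$. The single observation is that, when $_\Gamma M$ is flat, $\sigma$ is monic if and only if both $\sigma_X$ and $\sigma_Y$ are monic (flatness handles the $\sigma_Y\otimes_\Gamma M$ component). Combined with the presentation-level content underlying Theorem \ref{3.4} (silting with respect to $\sigma$ $\Leftrightarrow$ $X$ silting with respect to $\sigma_X$, $Y$ silting with respect to $\sigma_Y$, and $Y\otimes_\Gamma M\in\Gen X$) and the characterization ``tilting $=$ silting with respect to a monomorphic presentation'', both implications follow at once. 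The moral is that in silting theory the choice of presentation carries essential information; one must track monicity through the explicit construction of $\sigma$ rather than try to recover it from the module alone.
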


\begin{proof}
Since $\sigma$ is monic if and only if both  $\sigma_X$ and  $\sigma_Y$ are monic when $_\Gamma M$ is flat,
the assertion follows from Theorem \ref{3.4}.
\end{proof}

Applying Theorem \ref{3.4} and Theorem \ref{3.8} to the special triangular matrix ring
$\left(\begin{array}{cc}\Lambda & 0 \\ \Lambda & \Lambda\end{array}\right)$, we get the following result.

\begin{corollary}\label{3.9}
Let $X,Y\in\Mod \Lambda$ and $R=\left(\begin{array}{cc}\Lambda & 0 \\
\Lambda & \Lambda\end{array}\right)$. Then the following statements are equivalent.
\begin{enumerate}
\item[(1)] ${(X, 0)}\oplus{(Y, Y)}$ is a silting (resp. tilting) $R$-module.
\item[(2)] Both $X$ and $Y$ are silting (resp. tilting) $\Lambda$-modules and $Y\in \Gen X$.
\end{enumerate}
In particular, ${(X, 0)}\oplus{(X, X)}$ is a silting (resp. tilting) $R$-module
if and only if $X$ is a silting (resp. tilting) $\Lambda$-module.
\end{corollary}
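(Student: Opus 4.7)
The plan is to apply Theorem \ref{3.4} (for the silting case) and Theorem \ref{3.8} (for the tilting case) directly to the specialization $\Gamma = \Lambda$ and ${}_\Gamma M_\Lambda = {}_\Lambda \Lambda_\Lambda$. First I would observe that under this identification, the canonical isomorphism $Y \otimes_\Lambda \Lambda \cong Y$ of right $\Lambda$-modules makes the $R$-module $(Y \otimes_\Gamma M, Y)$ appearing in Theorem \ref{3.4} coincide with $(Y, Y)$, so the hypothesis and conclusion of the corollary match those of the theorem after this substitution.

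For the silting case, Theorem \ref{3.4} then asserts that $(X, 0) \oplus (Y, Y)$ is a silting $R$-module if and only if $X$ is a silting $\Lambda$-module, $Y$ is a silting $\Lambda$-module, and $Y \otimes_\Lambda \Lambda \cong Y \in \Gen X$, which is exactly condition (2) in the silting setting. For the tilting case, Theorem \ref{3.8} requires the additional hypothesis that ${}_\Gamma M$ is flat; here ${}_\Lambda \Lambda$ is projective, hence flat, so the hypothesis is automatically met and the equivalence transfers verbatim.

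For the final "in particular" statement, I would set $X = Y$ in the equivalence just established; the generation condition $Y \in \Gen X$ collapses to $X \in \Gen X$, which holds trivially since the identity map $X \to X$ exhibits $X$ as a quotient of a direct sum of copies of itself. Hence $(X, 0) \oplus (X, X)$ is silting (resp. tilting) over $R$ precisely when $X$ is silting (resp. tilting) over $\Lambda$.

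There is no real obstacle in this argument—the entire content of the corollary is absorbed by Theorems \ref{3.4} and \ref{3.8}, and the only checks are the identification $Y \otimes_\Lambda \Lambda \cong Y$, the flatness of ${}_\Lambda \Lambda$, and the triviality $X \in \Gen X$.
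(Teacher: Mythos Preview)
Your proposal is correct and matches the paper's own approach exactly: the paper states Corollary \ref{3.9} as an immediate consequence of applying Theorem \ref{3.4} and Theorem \ref{3.8} to the special triangular matrix ring $\left(\begin{smallmatrix}\Lambda & 0 \\ \Lambda & \Lambda\end{smallmatrix}\right)$, which is precisely what you do. The identifications $Y\otimes_\Lambda \Lambda\cong Y$, the flatness of $_\Lambda\Lambda$, and the triviality $X\in\Gen X$ are exactly the checks needed, and you have them all.
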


\section{Support $\tau$-tilting modules over triangular matrix algebras}
In this section, all modules considered are finitely generated modules over finite dimensional $k$-algebras
over an algebraically closed field $k$.

\begin{proposition}\label{4.1}
Let $X\in\mod\Lambda$ and $Y\in\mod\Gamma$. Then $(X,0)$$\oplus$$(Y\otimes_\Gamma M,Y)$
is a $\tau$-rigid $R$-module if and only if the following conditions are satisfied.
\begin{enumerate}
\item[(1)] $X$ is a $\tau$-rigid $\Lambda$-module.
\item[(2)] $Y$ is a $\tau$-rigid $\Gamma$-module.
\item[(3)] $\Hom_\Lambda(Y\otimes_\Gamma M,\tau X)=0$.
\end{enumerate}
\end{proposition}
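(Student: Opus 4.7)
The plan is to combine Proposition~\ref{3.3} with the dictionary between $\tau$-rigid modules and partial silting modules provided by Proposition~\ref{2.7} and Lemma~\ref{2.6}. The triple $(X,0)\oplus(Y\otimes_\Gamma M,Y)$ is partial silting over $R$ precisely when the constituent pieces are partial silting plus a compatibility condition, and Lemma~\ref{2.6} rewrites every ``$D_\pi$-membership'' statement as a ``$\Hom(-,\tau -)$-vanishing'' statement.

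Concretely, I would first fix minimal projective presentations $\sigma_X\colon P_1\to P_0$ of $X$ and $\sigma_Y\colon Q_1\to Q_0$ of $Y$, and form the projective presentation
\[\sigma=\begin{pmatrix}(\sigma_X,0) & 0 \\ 0 & (\sigma_Y\otimes M,\sigma_Y)\end{pmatrix}\]
of $(X,0)\oplus(Y\otimes_\Gamma M,Y)$ exactly as in the construction preceding Lemma~\ref{3.1}. Because $\sigma_X$, $\sigma_Y$, and hence $\sigma$ are maps between finitely generated projectives, the subcategories $D_{\sigma_X}$, $D_{\sigma_Y}$, $D_\sigma$ are automatically closed under direct sums, so by the remark after Definition~\ref{2.1} they are all torsion classes.

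Next I would apply Proposition~\ref{3.3}: $(X,0)\oplus(Y\otimes_\Gamma M,Y)$ is partial silting with respect to $\sigma$ if and only if $X$ is partial silting w.r.t.\ $\sigma_X$, $Y$ is partial silting w.r.t.\ $\sigma_Y$, and $Y\otimes_\Gamma M\in D_{\sigma_X}$. Now Lemma~\ref{2.6} applied to the minimal presentation $\sigma_X$ gives that $X\in D_{\sigma_X}$ iff $\Hom_\Lambda(X,\tau X)=0$, i.e.\ iff $X$ is $\tau$-rigid; the same applies to $Y$; and the third condition $Y\otimes_\Gamma M\in D_{\sigma_X}$ translates directly to $\Hom_\Lambda(Y\otimes_\Gamma M,\tau X)=0$. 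So the three bullets of Proposition~\ref{3.3} are exactly conditions (1)--(3) of the statement we want to prove.

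The main obstacle, and the part I expect to require some care, is bridging Proposition~\ref{3.3} (which pertains to a specific $\sigma$) with Proposition~\ref{2.7} (which says $\tau$-rigid is equivalent to partial silting for \emph{some} projective presentation). The ``$\Leftarrow$'' direction is immediate: once we know the direct sum is partial silting w.r.t.\ $\sigma$, Proposition~\ref{2.7}(1) yields $\tau$-rigidity. For ``$\Rightarrow$'' I would verify that the $\sigma$ built from the \emph{minimal} presentations $\sigma_X$, $\sigma_Y$ is itself a minimal projective presentation of $(X,0)\oplus(Y\otimes_\Gamma M,Y)$ over $R$, using that indecomposable projective $R$-modules are exactly of the forms $(P,0)$ and $(Q\otimes_\Gamma M,Q)$ recalled in Section~2. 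Then $\tau$-rigidity of the direct sum, combined with Lemma~\ref{2.6}, forces the direct sum to lie in $D_\sigma$, whence all three conditions follow by Lemma~\ref{3.1}(1). Alternatively, one can argue componentwise: $\tau$-rigidity of the direct sum implies $\Hom_R$ from each summand to $\tau$ of the other summand vanishes, and unpacking these Homs on triples recovers (1), (2), and (3) directly.
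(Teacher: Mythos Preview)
Your approach is essentially the same as the paper's: combine Proposition~\ref{3.3} with Proposition~\ref{2.7} and use Lemma~\ref{2.6} to translate the condition $Y\otimes_\Gamma M\in D_{\sigma_X}$ into $\Hom_\Lambda(Y\otimes_\Gamma M,\tau X)=0$. The paper's proof is two sentences and leaves implicit exactly the point you flag as the ``main obstacle'': that the $\sigma$ built from minimal $\sigma_X,\sigma_Y$ is itself the minimal projective presentation of $(X,0)\oplus(Y\otimes_\Gamma M,Y)$ over $R$, so that Lemma~\ref{2.6} applies to it directly and closes the ``$\Rightarrow$'' direction.

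Your proposed verification of minimality is correct and goes through. The key computation is that for the summand $(Y\otimes_\Gamma M,Y)_{\id}$, the top consists only of simples of the form $(0,S')$ (since any map $(Y\otimes_\Gamma M,Y)_{\id}\to(S,0)$ is forced to be zero by the commutative square), so its projective cover is $(Q_0\otimes_\Gamma M,Q_0)$; and the kernel of $(Q_0\otimes_\Gamma M,Q_0)\to(Y\otimes_\Gamma M,Y)$ again has top supported only on the $\Gamma$-side (because the structure map $\Omega Y\otimes_\Gamma M\to\ker(\sigma_Y'\otimes M)$ is surjective by right exactness of $-\otimes_\Gamma M$), so its projective cover is $(Q_1\otimes_\Gamma M,Q_1)$. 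The $(X,0)$ summand is even easier. Thus $\sigma$ is minimal, and the rest of your argument runs exactly as you describe. Your alternative componentwise route via computing $\tau_R$ on the summands would also work but is longer.
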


\begin{proof}
Considering the minimal projective presentation $\sigma_X$ of $X$, we have that $Y\otimes_\Gamma M\in D_{\sigma_X}$
if and only if $\Hom_\Lambda(Y\otimes_\Gamma M,\tau X)=0$ by Lemma 2.6. Thus the assertion follows from
Propositions \ref{2.7} and \ref{3.3}.
\end{proof}

For any $X\in\mod\Lambda$, it is clear that $(X, 0)$ is indecomposable if and only if so is $X$.
Let $Y\in\mod\Gamma$. Then ${(Y\otimes_\Gamma M, Y)}_{\id}$ is indecomposable implies so is $Y$;
conversely, assume that $Y$ is indecomposable and write ${(Y\otimes_\Gamma M,Y)}_{\id}
={(X_1, Y_1)}\oplus{(X_2, Y_2)}$ with $X_1,X_2\in \mod \Lambda$ and $Y_1,Y_2\in \mod \Gamma$.
Then either $Y_1=0$ or $Y_2=0$. If $Y_1=0$, then there exists a split epimorphism
$(\alpha,0):{(Y\otimes_\Gamma M, Y)}_{\id}\to {(X_1, 0)}$ in $\mod R$,
which implies $\alpha =0$. So $X_1=0$ and ${(X_1, Y_1)}=0$. Similarly, if $Y_2=0$, then ${(X_2, Y_2)}=0$.
Thus we conclude that ${(Y\otimes_\Gamma M,Y)}_{\id}$ is also indecomposable.
This proves the following lemma.

\begin{lemma}\label{4.2}
For any $X\in\mod\Lambda$ and $Y\in\mod\Gamma$, we have
$$|{(X, 0)}|+|{(Y\otimes_\Gamma M,Y)}|=|X|+|Y|.$$
\end{lemma}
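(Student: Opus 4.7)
The plan is to reduce the identity to counting indecomposable summands on each side using the two structural facts established in the paragraph preceding the lemma: $(X,0)$ is indecomposable iff $X$ is, and $(Y\otimes_\Gamma M,Y)_{\id}$ is indecomposable iff $Y$ is. First I would decompose $X$ and $Y$ into indecomposables as $X=\bigoplus_{i=1}^{n}X_i$ with $n=|X|$ and $Y=\bigoplus_{j=1}^{m}Y_j$ with $m=|Y|$, where the $X_i$ are pairwise non-isomorphic and the $Y_j$ are pairwise non-isomorphic (since all modules in this section are basic).

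Next I would transport these decompositions to $\mod R$ under the equivalence with $\mathcal{C}_R$. Since direct sums in $\mathcal{C}_R$ are taken componentwise and $-\otimes_\Gamma M$ commutes with direct sums, we obtain
\[
(X,0)\;\cong\;\bigoplus_{i=1}^{n}(X_i,0),\qquad
(Y\otimes_\Gamma M,Y)_{\id}\;\cong\;\bigoplus_{j=1}^{m}(Y_j\otimes_\Gamma M,Y_j)_{\id}.
\]
By the two remarks recalled above, every summand on the right is indecomposable, so these are decompositions of the left-hand sides into $n$ and $m$ indecomposable summands respectively.

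It then suffices to show these $n+m$ indecomposables are pairwise non-isomorphic; the count on the left of the claimed equality will then equal $n+m=|X|+|Y|$. Using the equivalence with $\mathcal{C}_R$, any isomorphism ${(A_1,B_1)}_{f_1}\cong{(A_2,B_2)}_{f_2}$ in $\mod R$ forces $A_1\cong A_2$ in $\mod\Lambda$ and $B_1\cong B_2$ in $\mod\Gamma$. Applied to our summands: $(X_i,0)\cong(X_{i'},0)$ forces $X_i\cong X_{i'}$ hence $i=i'$; similarly $(Y_j\otimes_\Gamma M,Y_j)_{\id}\cong(Y_{j'}\otimes_\Gamma M,Y_{j'})_{\id}$ forces $Y_j\cong Y_{j'}$ hence $j=j'$; and $(X_i,0)\cong(Y_j\otimes_\Gamma M,Y_j)_{\id}$ is impossible since it would force $Y_j=0$, contradicting the indecomposability of $Y_j$.

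The argument is almost entirely bookkeeping; the only point requiring any care is the "conversely" half of the indecomposability remark for $(Y\otimes_\Gamma M,Y)_{\id}$, which the paper has just handled by the split-epimorphism argument, so there is no real obstacle. The proof is therefore a short assembly of the observations already in hand.
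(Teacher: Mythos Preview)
Your proposal is correct and follows essentially the same approach as the paper: the paper's proof consists precisely of the two indecomposability biconditionals in the paragraph preceding the lemma, followed by ``This proves the following lemma,'' leaving the decomposition and counting implicit, whereas you spell these out. One small remark: the left-hand side of the lemma is a \emph{sum} of two separate counts, $|(X,0)|+|(Y\otimes_\Gamma M,Y)|$, not the count of the direct sum, so your cross-check that $(X_i,0)\not\cong(Y_j\otimes_\Gamma M,Y_j)_{\id}$ is not strictly needed for the statement as written (though it is needed for the application in Theorem~\ref{4.3}, and causes no harm).
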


The following is the main result in this section.

\begin{theorem}\label{4.3}
Let $X\in\mod\Lambda$ and $Y\in\mod\Gamma$. Then $(X, 0)$$\oplus$$(Y\otimes_\Gamma M, Y)$
is a support $\tau$-tilting $R$-module if and only if the following conditions are satisfied.
\begin{enumerate}
\item[(1)] $X$ is a support $\tau$-tilting $\Lambda$-module.
\item[(2)] $Y$ is  a support $\tau$-tilting $\Gamma$-module.
\item[(3)] $\Hom_\Lambda(Y\otimes_\Gamma M,\tau X)=0$.
\item[(4)] $\Hom_\Lambda(e\Lambda, Y\otimes_\Gamma M)=0$, where $e$ is the maximal idempotent
such that $\Hom_\Lambda(e\Lambda, X)=0$.
\end{enumerate}
\end{theorem}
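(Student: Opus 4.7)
The plan is to reduce everything to two ingredients already available: Proposition 4.1 handles the $\tau$-rigid part (conditions (1), (2), (3)), and Lemma 4.2 handles the counting of indecomposable summands. What remains is to match up the ``supporting'' idempotents of $R$ with those of $\Lambda$ and $\Gamma$, and here condition (4) will naturally appear. Recall that an $R$-module $N$ is support $\tau$-tilting if and only if $N$ is $\tau$-rigid and $|N|+|e'R|=|R|$, where $e'$ is the maximal idempotent with $\Hom_R(e'R,N)=0$.

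First I would compute $\Hom_R$ from each type of indecomposable projective of $R$ into our module. Using the description of morphisms in $\mathcal{C}_R$, for any $P\in\proj\Lambda$ and $Q\in\proj\Gamma$ one gets
\[
\Hom_R\!\bigl((P,0),\,(X,0)\oplus(Y\otimes_\Gamma M,Y)\bigr)\cong \Hom_\Lambda(P,X)\oplus\Hom_\Lambda(P,Y\otimes_\Gamma M),
\]
\[
\Hom_R\!\bigl((Q\otimes_\Gamma M,Q),\,(X,0)\oplus(Y\otimes_\Gamma M,Y)\bigr)\cong \Hom_\Gamma(Q,Y),
\]
the second isomorphism being given by $\beta\mapsto(h\circ(\beta\otimes M),\beta)$ in the projection onto the second factor and zero on the first. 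Let $e\in\Lambda$ and $f\in\Gamma$ be the maximal idempotents with $\Hom_\Lambda(e\Lambda,X)=0$ and $\Hom_\Gamma(f\Gamma,Y)=0$ respectively. The displays above show that the maximal idempotent $e'$ of $R$ vanishing on $(X,0)\oplus(Y\otimes_\Gamma M,Y)$ consists precisely of the summands $(P,0)$ of $(e\Lambda,0)$ satisfying additionally $\Hom_\Lambda(P,Y\otimes_\Gamma M)=0$, together with all summands $(Q\otimes_\Gamma M,Q)$ of $(f\Gamma\otimes_\Gamma M,f\Gamma)$. In particular $|e'R|\le |e\Lambda|+|f\Gamma|$, with equality exactly when $\Hom_\Lambda(e\Lambda,Y\otimes_\Gamma M)=0$, i.e.\ condition (4).

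For the sufficiency, assume (1)--(4). Then $(X,0)\oplus(Y\otimes_\Gamma M,Y)$ is $\tau$-rigid by Proposition \ref{4.1}, and by the identification above together with (4) we have $|e'R|=|e\Lambda|+|f\Gamma|$. Since $X$ and $Y$ are support $\tau$-tilting, $|X|+|e\Lambda|=|\Lambda|$ and $|Y|+|f\Gamma|=|\Gamma|$, and Lemma \ref{4.2} gives
\[
|(X,0)\oplus(Y\otimes_\Gamma M,Y)|+|e'R|=|X|+|Y|+|e\Lambda|+|f\Gamma|=|\Lambda|+|\Gamma|=|R|,
\]
so $(X,0)\oplus(Y\otimes_\Gamma M,Y)$ is support $\tau$-tilting. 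For the necessity, Proposition \ref{4.1} immediately yields (1)--(3) with $X$ merely $\tau$-rigid over $\Lambda$ and $Y$ merely $\tau$-rigid over $\Gamma$. By Lemma \ref{2.4}, $|X|+|e\Lambda|\le|\Lambda|$ and $|Y|+|f\Gamma|\le|\Gamma|$. Combining with Lemma \ref{4.2}, the support $\tau$-tilting condition for the $R$-module, and the inequality $|e'R|\le|e\Lambda|+|f\Gamma|$ established above, one obtains
\[
|\Lambda|+|\Gamma|=|X|+|Y|+|e'R|\le |X|+|Y|+|e\Lambda|+|f\Gamma|\le |\Lambda|+|\Gamma|,
\]
which forces equalities throughout. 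The equality $|X|+|e\Lambda|=|\Lambda|$ says $X$ is support $\tau$-tilting, $|Y|+|f\Gamma|=|\Gamma|$ says the same for $Y$, and $|e'R|=|e\Lambda|+|f\Gamma|$ forces condition (4).

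The only delicate point is the precise determination of $e'$, which requires the observation that a morphism $(P,0)\to(Y\otimes_\Gamma M,Y)$ may be nonzero on the first component even when it is zero on the second; keeping track of the extra $\Hom_\Lambda(P,Y\otimes_\Gamma M)$-contribution is what makes condition (4) appear. Once that bookkeeping is correct, the proof is a dimension count sandwiched between Lemma \ref{2.4} and the support $\tau$-tilting identity.
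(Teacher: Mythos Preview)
Your proof is correct and follows essentially the same approach as the paper's first proof: use Proposition~\ref{4.1} for the $\tau$-rigid part, Lemma~\ref{4.2} for the count of indecomposable summands, Lemma~\ref{2.4} for the two inequalities $|X|+|e\Lambda|\le|\Lambda|$ and $|Y|+|f\Gamma|\le|\Gamma|$, and then squeeze. You are slightly more explicit than the paper in that you first compute the two $\Hom_R$-spaces from projectives and thereby pin down the maximal vanishing idempotent $e'R$ exactly, while the paper simply writes the projective part of the support $\tau$-tilting pair as $(e\Lambda,0)\oplus(e'\Gamma\otimes_\Gamma M,e'\Gamma)$ and reads off the three vanishing conditions; but the logical skeleton is the same. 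The paper also records a second, shorter proof via Theorem~\ref{3.4} and the silting/support $\tau$-tilting dictionary, which you may find worth noting.
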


\begin{proof}
Assume that $({(X,0)}\oplus{(Y\otimes_\Gamma M, Y)},
{(e\Lambda, 0)}\oplus{(e'\Gamma\otimes_\Gamma M, e'\Gamma)})$ is a support $\tau$-tilting pair
in $\mod R$, where $e$ and $e'$ are idempotents of $\Lambda$ and $\Gamma$ respectively.
Then $\Hom_\Lambda(Y\otimes_\Gamma M,\tau X)=0$ and both $X$ and $Y$ are $\tau$-rigid by Proposition \ref{4.1}.
Moreover, we have
$$\Hom_\Lambda(e\Lambda,X)=0,\Hom_\Gamma(e'\Gamma,Y)=0,\Hom_\Lambda(e\Lambda, Y\otimes_\Gamma M)=0.$$
By Lemma \ref{2.4}, we have $|X|+|e\Lambda|\leq|\Lambda|$ and $|Y|+|e'\Gamma|\leq|\Gamma|$. Note that
$$|X|+|Y|+|e\Lambda|+|e'\Gamma|=|{(X, 0)}|+|{(Y\otimes_\Gamma M, Y)}|+|{(e\Lambda, 0)}|
+|{(e'\Gamma\otimes_\Gamma M, e'\Gamma)}|=|R|=|\Lambda|+|\Gamma|$$
by Lemma \ref{4.2}. So $|X|+|e\Lambda|=|\Lambda|$ and $|Y|+|e'\Gamma|=|\Gamma|$,
and hence both $X$ and $Y$ are support $\tau$-tilting. Moreover, the
support $\tau$-tilting pair $(X, e\Lambda)$ implies that  $e$ is the maximal idempotent
such that $\Hom_\Lambda(e\Lambda, X)=0$.

Conversely, assume that the conditions (1)--(4) are satisfied. Then $(X, 0)$$\oplus$$(Y\otimes_\Gamma M, Y)$
is a $\tau$-rigid $R$-module by Proposition \ref{4.1}. Moreover, $(X, e\Lambda)$ is a support $\tau$-tilting
pair in $\mod \Lambda$. Let $(Y, e'\Gamma)$ be a support $\tau$-tilting pair in $\mod \Gamma$. Then
$\Hom_R({{(e\Lambda,0)}\oplus{(e'\Gamma\otimes_\Gamma M, e'\Gamma)}},{(X, 0)}\oplus{(Y\otimes_\Gamma M, Y)})=0$.
Moreover, we have
$$|{(X, 0)}|+|{(Y\otimes_\Gamma M, Y)}|+|{(e\Lambda, 0)}|+|{(e'\Gamma\otimes_\Gamma M, e'\Gamma})|
=|X|+|Y|+|e\Lambda|+|e'\Gamma|=|\Lambda|+|\Gamma|=|R|$$
by Lemma \ref{4.2}. Thus $({(X, 0)}\oplus{(Y\otimes_\Gamma M, Y)}, {(e\Lambda, 0)}\oplus
{(e'\Gamma\otimes_\Gamma M, e'\Gamma)})$ is a support $\tau$-tilting pair in $\mod R$.
\end{proof}

We give another proof of Theorem \ref{4.3} as follows.
\begin{proof}
By Theorem \ref{3.4}, we have that ${(X, 0)}\oplus{(Y\otimes_\Gamma M, Y)}$
is a silting $R$-module if and only if $X$ is a silting $\Lambda$-module with respect to
a projective presentation $\sigma_X$ of $X$, $Y$ is a silting
$\Gamma$-module and $Y\otimes_\Gamma M\in \Gen X(=D_{\sigma_X})$. By \cite[Theorem 4.9]{AMV},
$\sigma_X$ is a 2-term silting complex. Let $e$ be the maximal idempotent such that $\Hom_\Lambda(e\Lambda, X)=0$.
Then by \cite[Theorem 3.2]{AIR}, we have $\sigma_X=\sigma\oplus\sigma'$ with $\sigma$ a minimal projective
presentation of $X$ and $\sigma'$ the complex $e\Lambda\rightarrow 0$.
So the condition $Y\otimes_\Gamma M\in D_{\sigma_X}$ is equivalent to that $Y\otimes_\Gamma M\in D_{\sigma}$ and
$Y\otimes_\Gamma M\in D_{\sigma'}$, and hence is equivalent to that $\Hom_\Lambda(Y\otimes_\Gamma M,\tau X)=0$
and $\Hom_\Lambda(e\Lambda, Y\otimes_\Gamma M)=0$ by Lemma \ref{2.6}. Now Theorem \ref{4.3} follows from Proposition 2.7.
\end{proof}

\begin{remark}\label{4.4}
{\rm \begin{enumerate}
\item[]
\item[(1)] In fact, the maximal idempotent $e$ in Theorem \ref{4.3}(4) is exactly the idempotent
such that $(X, e\Lambda)$ is a support $\tau$-tilting pair in $\mod\Lambda$ (\cite[Proposition 2.3(a)]{AIR}).
\item[(2)] Since $\tau$-tilting modules are exactly sincere support $\tau$-tilting modules by \cite[Proposition 2.2(a)]{AIR},
it follows from Theorem \ref{4.3} that $(X,0)$$\oplus$$(Y\otimes_\Gamma M, Y)$ is a $\tau$-tilting
$R$-module if and only if $X$ is a $\tau$-tilting $\Lambda$-module, $Y$ is a $\tau$-tilting $\Gamma$-module
and $\Hom_\Lambda(Y\otimes_\Gamma M,\tau X)=0$.
\item[(3)] Note that tilting modules are exactly $\tau$-tilting modules whose projective dimension is at most one.
Then by (2), we have that if $_\Gamma M$ is projective, then $(X,0)$$\oplus$$(Y\otimes_\Gamma M,Y)$
is a tilting $R$-module if and only if $X$ is a tilting $\Lambda$-module, $Y$ is a tilting $\Gamma$-module and
$\Hom_\Lambda(Y\otimes_\Gamma M,\tau X)=0$. This result can be induced directly from \cite[Theorem A]{AN}.
Take $S=\Lambda\times\Gamma$ as in Remark \ref{3.5}. Then $(X\oplus Y)\otimes R$ is a tilting $R$-module if and only if $X\oplus Y$
is a tilting $S$-module, $\Hom_R((X\oplus Y)\otimes_SM,\tau(X\oplus Y))=0$
(that is, $\Hom_\Lambda(Y\otimes_\Gamma M,\tau X)=0$) and $\Hom_k({_SM},k)\in \Gen(X\oplus Y)$. Since $\Hom_k({_SM},k)$ is an
injective $S$-module when $_\Gamma M$ is projective, the assertion now is obvious.
\end{enumerate}}
\end{remark}

\begin{corollary}\label{4.5}
For any $Y\in\mod\Gamma$, $(Y\otimes_\Gamma M, Y)$ is a support $\tau$-tilting $R$-module
if and only if $Y$ is a support $\tau$-tilting $\Gamma$-module and $Y\otimes_\Gamma M=0$.
\end{corollary}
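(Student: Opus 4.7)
The plan is to deduce this as a direct specialization of Theorem \ref{4.3} by taking $X=0$. With that choice, the $R$-module $(X,0)\oplus(Y\otimes_\Gamma M, Y)$ collapses to $(Y\otimes_\Gamma M, Y)$, so it suffices to check that the four conditions of Theorem \ref{4.3} reduce exactly to the two conditions in the statement.

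First I would verify that $X=0$ is trivially a support $\tau$-tilting $\Lambda$-module, witnessed by the idempotent $e=1$: indeed, $0$ is a $\tau$-tilting $\Lambda/\Lambda e\Lambda$-module since $\Lambda/\Lambda e\Lambda=0$, so condition (1) of Theorem \ref{4.3} holds. Next, condition (3) is automatic because $\tau X=\tau 0=0$, hence $\Hom_\Lambda(Y\otimes_\Gamma M,\tau X)=0$ trivially. The key observation is that the maximal idempotent $e$ of $\Lambda$ with $\Hom_\Lambda(e\Lambda,X)=0$ is $e=1$, since $X=0$ kills every projective. Under this identification, condition (4) becomes
\[
\Hom_\Lambda(\Lambda,\,Y\otimes_\Gamma M)\;\cong\;Y\otimes_\Gamma M\;=\;0,
\]
which is equivalent to $Y\otimes_\Gamma M=0$. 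Combined with condition (2), which is exactly that $Y$ is support $\tau$-tilting over $\Gamma$, the equivalence follows.

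Conversely, if $Y$ is support $\tau$-tilting and $Y\otimes_\Gamma M=0$, then choosing $X=0$ all four hypotheses of Theorem \ref{4.3} are satisfied, and the conclusion that $(Y\otimes_\Gamma M,Y)$ is support $\tau$-tilting over $R$ is immediate.

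I do not anticipate a substantive obstacle here; the only mild subtlety is recognizing that the zero module is legitimately a support $\tau$-tilting module (via $e=1$), which is what allows Theorem \ref{4.3} to be applied with $X=0$ and which forces the fourth condition to become the vanishing $Y\otimes_\Gamma M=0$.
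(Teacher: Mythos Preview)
Your proposal is correct and is exactly the intended argument: the paper states Corollary \ref{4.5} without proof immediately after Theorem \ref{4.3}, and the specialization $X=0$ (with $e=1$ as the corresponding idempotent, forcing condition (4) to become $\Hom_\Lambda(\Lambda,Y\otimes_\Gamma M)\cong Y\otimes_\Gamma M=0$) is precisely what is meant.
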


Putting $\Gamma=M=\Lambda$ in Theorem \ref{4.3}, we get the following result.

\begin{corollary}\label{4.6}
Let $X,Y\in\mod\Lambda$ and $R=\left(\begin{array}{cc}\Lambda & 0 \\
\Lambda & \Lambda\end{array}\right)$. Then the following statements are equivalent.
\begin{enumerate}
\item[(1)] $(X, 0)$$\oplus$$(Y,Y)$ is a support $\tau$-tilting $R$-module.
\item[(2)] Both $X$ and $Y$ are  support $\tau$-tilting $\Lambda$-modules and
$\Hom_\Lambda(Y,\tau X)=0=\Hom_{\Lambda}(e\Lambda,Y)$, where $e$ is the maximal idempotent
such that $\Hom_\Lambda(e\Lambda, X)=0$.
\end{enumerate}
In particular, $(X, 0)$$\oplus$$(Y, Y)$
is a tilting $R$-module if and only if both $X$ and $Y$ are  tilting $\Lambda$-modules and
$\Hom_\Lambda(Y,\tau X)=0$.
\end{corollary}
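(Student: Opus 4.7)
The plan is to obtain this corollary as a direct specialization of Theorem \ref{4.3} (together with Remark \ref{4.4}) to the case $\Gamma = \Lambda$, $M = {}_\Lambda\Lambda_\Lambda$. First I would record the canonical isomorphism $Y \otimes_\Lambda \Lambda \cong Y$ of right $\Lambda$-modules and verify that under this identification the $R$-module $(X, 0) \oplus (Y \otimes_\Gamma M, Y)$ agrees with $(X, 0) \oplus (Y, Y)$. This reduces the corollary to a translation of the four conditions of Theorem \ref{4.3} into the two conditions listed here.

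Next I would rewrite those conditions. Condition (1) of Theorem \ref{4.3} becomes ``$X$ is a support $\tau$-tilting $\Lambda$-module'' unchanged; condition (2) becomes ``$Y$ is a support $\tau$-tilting $\Lambda$-module''; and conditions (3) and (4), via $Y \otimes_\Lambda \Lambda \cong Y$, become $\Hom_\Lambda(Y, \tau X) = 0$ and $\Hom_\Lambda(e\Lambda, Y) = 0$ respectively, with $e$ still the maximal idempotent satisfying $\Hom_\Lambda(e\Lambda, X) = 0$. This yields the main equivalence.

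For the tilting refinement, the plan is to invoke Remark \ref{4.4}(3): since $_\Lambda\Lambda$ is projective as a left $\Lambda$-module, that remark applied with $\Gamma = M = \Lambda$ gives exactly the stated equivalence for tilting modules. The condition $\Hom_\Lambda(e\Lambda, Y) = 0$ does not need to be retained in the tilting case because every tilting module is sincere (so $\tau$-tilting, see Remark \ref{4.4}(2) and \cite[Proposition 2.2(a)]{AIR}), forcing $e = 0$ and making that condition vacuous.

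There is no real obstacle; the argument is essentially bookkeeping. The only thing that requires any attention is checking that the tensor identification $Y \otimes_\Lambda \Lambda \cong Y$ is compatible with the morphism data (the canonical $f = \id$) defining the triples, so that the conversion from $(Y \otimes_\Lambda \Lambda, Y)_{\id}$ to $(Y, Y)_{\id}$ is truly an isomorphism of $R$-modules, after which the four-for-two translation of hypotheses is automatic.
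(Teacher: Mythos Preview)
Your proposal is correct and matches the paper's approach exactly: the paper simply states ``Putting $\Gamma=M=\Lambda$ in Theorem \ref{4.3}'' before the corollary, and your plan carries out precisely that specialization, with the tilting refinement handled via Remark \ref{4.4}(3) as you indicate.
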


Recall from \cite[Chapter XV, Definition 1.1(a)]{SS2007} that the {\it one-point extension} of $\Lambda$ by
the module $M_{\Lambda}$ is the special triangular algebra
$\left(\begin{array}{cc}\Lambda & 0 \\
_kM_{\Lambda} & k\end{array}\right)$. There are only two support $\tau$-tilting $k$-modules: 0 and $k$.
Let $e_{a}$ be the idempotent corresponding to the extension point $a$. Then we have $k\otimes_kM\cong M_\Lambda$ and
${(k\otimes_kM, k)}\cong {(M, k)}=e_aR$. As a consequence of Theorem \ref{4.3}, we get the following

\begin{corollary}\label{4.7}
Let $X\in\mod \Lambda$ and $R=\left(\begin{array}{cc}\Lambda & 0 \\
_kM_{\Lambda} & k\end{array}\right)$. Then we have
\begin{enumerate}
\item[(1)] $X_R$ is support $\tau$-tilting if and only if $X_\Lambda$ is support $\tau$-tilting.
\item[(2)] $X_R \oplus e_{a}R$ is support $\tau$-tilting $R$-module
if and only if $(X, e\Lambda)$ is a support $\tau$-tilting pair in $\mod \Lambda$,
$\Hom_\Lambda(M,\tau X)=0=\Hom_\Lambda(e\Lambda, M)$ for some idempotent $e$ of $\Lambda$.
\item[(3)] $X_R \oplus e_{a}R$ is a tilting $R$-module
if and only if $X$ is a tilting $\Lambda$-module and $\Hom_\Lambda(M,\tau X)=0$.
\end{enumerate}
\end{corollary}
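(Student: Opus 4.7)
The plan is to specialize Theorem \ref{4.3} (together with Remark \ref{4.4}) to the case $\Gamma=k$ and $M={}_kM_\Lambda$. Under the equivalence $\Mod R\simeq \mathcal{C}_R$, a $\Lambda$-module $X$ viewed as an $R$-module via the projection $R\to \Lambda$ corresponds to the triple $(X,0)$, while $e_aR\cong (k\otimes_k M,k)_{\id}\cong (M,k)$. Over the field $k$ the only support $\tau$-tilting modules are $k$ (with support $\tau$-tilting pair $(k,0)$) and $0$ (with pair $(0,k)$), and every $k$-module is automatically $\tau$-rigid. Thus the template $(X,0)\oplus (Y\otimes_\Gamma M,Y)$ captures both $X_R$ (taking $Y=0$) and $X_R\oplus e_aR$ (taking $Y=k$).

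For (1), I would apply Theorem \ref{4.3} with $Y=0$. Then $Y\otimes_k M=0$ makes conditions (3) and (4) of the theorem vacuous, while condition (2) is fulfilled by the trivial pair $(0,k)$. The criterion reduces to ``$X$ is support $\tau$-tilting over $\Lambda$,'' which is the content of (1); the projective complement assembled from the theorem is then $(e\Lambda,0)\oplus (M,k)=(e\Lambda,0)\oplus e_aR$.

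For (2), I would take $Y=k$, so that $(Y\otimes_k M,Y)\cong (M,k)\cong e_aR$ and no extra projective summand is contributed on the $\Gamma$-side (the pair over $k$ is $(k,0)$). Theorem \ref{4.3} then says $(X,0)\oplus e_aR$ is support $\tau$-tilting over $R$ if and only if $X$ is support $\tau$-tilting over $\Lambda$, $\Hom_\Lambda(M,\tau X)=0$, and $\Hom_\Lambda(e\Lambda,M)=0$ for the maximal idempotent $e$ with $\Hom_\Lambda(e\Lambda,X)=0$. By Remark \ref{4.4}(1) this maximal $e$ is exactly the idempotent making $(X,e\Lambda)$ a support $\tau$-tilting pair in $\mod\Lambda$, which is precisely the form stated in (2).

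For (3), I would use that ${}_kM$ is automatically projective (in particular flat) because $k$ is a field, and that $k$ is a tilting $k$-module. Remark \ref{4.4}(3) applied with $Y=k$ then yields that $(X,0)\oplus (M,k)$ is tilting over $R$ if and only if $X$ is a tilting $\Lambda$-module and $\Hom_\Lambda(M,\tau X)=0$; the extra condition $\Hom_\Lambda(e\Lambda,M)=0$ from part (2) disappears because a tilting module is sincere, forcing $e=0$. There is no substantive obstacle in the argument: the one checkpoint requiring care is matching the two descriptions of the idempotent $e$ in part (2) via Remark \ref{4.4}(1); everything else is a direct specialization of the already-established machinery.
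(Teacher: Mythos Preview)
Your proposal is correct and follows exactly the route the paper intends: the corollary is stated in the paper without an explicit proof, merely prefaced by the observations that the only support $\tau$-tilting $k$-modules are $0$ and $k$ and that $(k\otimes_k M,k)\cong e_aR$, and then declared ``a consequence of Theorem \ref{4.3}''. Your detailed specialization with $Y=0$ for part (1), $Y=k$ for parts (2) and (3), together with the invocation of Remark \ref{4.4}(1) and (3), is precisely the intended unpacking of that sentence.
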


Let $\Lambda$ be an algebra and $_\Lambda M_\Lambda$ a ($\Lambda,\Lambda)$-bimodule. Recall that
$$T(\Lambda, M):=\Lambda\oplus{_{\Lambda}M}\oplus{_{\Lambda}M^2}\oplus\cdots\oplus{_{\Lambda}M^n}\oplus\cdots$$
as an abelian group is called the {\it tensor algebra} of $M$ over $\Lambda$,
where $M^n$ is the $n$-fold $\Lambda$-tensor product $M\oplus{_{\Lambda}M}\oplus\cdots\oplus{_{\Lambda}M}$.
We will assume that $T(\Lambda, M)$ is finite dimensional (equivalently, $M$ is nilpotent).
Note that the triangular matrix algebra $\left(\begin{array}{cc}\Lambda & 0 \\
M & \Gamma\end{array}\right)$ can be viewed as the tensor algebra
$T(\Lambda\times\Gamma, {_{\Lambda\times\Gamma}M_{\Lambda\times\Gamma}})$.
The following result is a generalization of Theorem \ref{4.3}.

\begin{theorem}\label{4.8}
Let $\Lambda$ be an algebra, $_\Lambda M_\Lambda$ a $(\Lambda,\Lambda)$-bimodule
and $X\in\mod\Lambda$, and let
$T(\Lambda,M)$ be the tensor algebra of $M$ over $\Lambda$ and $e$ an idempotent of $\Lambda$.
Then the following statements are equivalent.
\begin{enumerate}
\item[(1)] $(X\otimes_{\Lambda}T(\Lambda,M), eT(\Lambda,M))$ is a support $\tau$-tilting pair
in $\mod T(\Lambda,M)$.
\item[(2)] $(X, e\Lambda)$ is a support $\tau$-tilting pair in $\mod\Lambda$ and
$$\Hom_{\Lambda}(X\otimes{_{\Lambda}N}, \tau X)=0=\Hom_{\Lambda}(e\Lambda, {X\otimes{_{\Lambda}N}}),$$
where $N=M\oplus{_{\Lambda}M^2}\oplus\cdots\oplus{_{\Lambda}M^n}\oplus\cdots$.
\end{enumerate}
\end{theorem}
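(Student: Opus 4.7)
\smallskip\noindent\textbf{Proof plan.} I would adapt the second proof of Theorem \ref{4.3}, replacing the split surjection associated with the triangular matrix ring by the split augmentation of the tensor algebra. Set $S:=T(\Lambda,M)$, so that $S=\Lambda\oplus N$ as $(\Lambda,\Lambda)$-bimodules and $N$ is a nilpotent two-sided ideal of $S$. Consequently $N\subseteq J(S)$ and $S/J(S)\cong\Lambda/J(\Lambda)$, so for every $Z\in\mod S$ the $S$-top of $Z$ coincides with the $\Lambda$-top of the restriction $Z|_\Lambda$. In particular, if $P_0\to X\to 0$ is the projective cover of $X$ over $\Lambda$, then $P_0\otimes_\Lambda S$ is the projective cover of $X\otimes_\Lambda S$ over $S$; moreover $|X\otimes_\Lambda S|=|X|$, $|eS|=|e\Lambda|$ and $|S|=|\Lambda|$, so the counting condition $|X\otimes_\Lambda S|+|eS|=|S|$ is equivalent to $|X|+|e\Lambda|=|\Lambda|$.

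Next I would invoke Hom-tensor adjunction. Letting $\sigma_X\colon P_1\to P_0$ denote the minimal projective presentation of $X$ over $\Lambda$, one sees that $\sigma_X\otimes_\Lambda S$ is a projective presentation of $X\otimes_\Lambda S$ over $S$, and for any $Z\in\mod S$,
\[
\Hom_S(\sigma_X\otimes_\Lambda S,\,Z)\cong\Hom_\Lambda(\sigma_X,\,Z|_\Lambda),\qquad \Hom_S(eS,Z)\cong Ze=(Z|_\Lambda)e.
\]
Together with Lemma \ref{2.6} this yields
\[
D_{(\sigma_X\otimes_\Lambda S)\oplus(eS\to 0)}=\{Z\in\mod S:\Hom_\Lambda(Z|_\Lambda,\tau X)=0\text{ and }\Hom_\Lambda(e\Lambda,Z|_\Lambda)=0\}.
\]

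I would then establish the generation equality
\[
\Gen_S(X\otimes_\Lambda S)=\{Z\in\mod S:Z|_\Lambda\in\Gen_\Lambda X\}.
\]
The inclusion $\subseteq$ uses that $(X\otimes_\Lambda S)|_\Lambda=X\oplus X\otimes_\Lambda N$ lies in $\Gen_\Lambda X$ exactly under the two $\Hom$-vanishings appearing in~(2), while $\supseteq$ lifts any $\Lambda$-epimorphism $X^{(J)}\to Z|_\Lambda$ via the always-surjective counit $Z|_\Lambda\otimes_\Lambda S\to Z$ to an $S$-epimorphism $(X\otimes_\Lambda S)^{(J)}\to Z$. Invoking Proposition \ref{2.7} together with the equivalence recalled just before Section~3 (that a pair $(T,e\Lambda)$ is a support $\tau$-tilting pair iff $T$ is silting with respect to the sum of its minimal projective presentation and $e\Lambda\to 0$), the conditions in~(2) become equivalent to $\Gen_\Lambda X=D_{\sigma_X\oplus(e\Lambda\to 0)}$ together with $(X\otimes_\Lambda S)|_\Lambda\in\Gen_\Lambda X$; by the two displays above this is in turn equivalent to $\Gen_S(X\otimes_\Lambda S)=D_{(\sigma_X\otimes_\Lambda S)\oplus(eS\to 0)}$ plus the counting equality from paragraph one, i.e.\ to~(1).

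The main subtlety to handle is that $\sigma_X\otimes_\Lambda S$ need not coincide with the minimal projective presentation of $X\otimes_\Lambda S$ over $S$ (there may be $\Tor_1^\Lambda(X,N)$ contributions to the second syzygy). However, the top-matching of paragraph one already fixes the correct projective cover, and the Hom-tensor adjunction identifies the subcategory $D_{\sigma_X\otimes_\Lambda S}$ intrinsically through the restriction functor $\mod S\to \mod\Lambda$, so any such discrepancy is invisible to the silting condition; both sides of the desired equivalence are governed by exactly the same closed subcategory of $\mod S$.
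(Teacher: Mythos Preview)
Your overall route differs from the paper's: the paper invokes Breaz's ascent--descent theorem for 2-term silting complexes \cite[Theorem 2.2]{B} along the split surjection $S\to\Lambda$ as a black box, whereas you work directly with the adjunction $(-\otimes_\Lambda S,\ \mathrm{restriction})$ and the identification $D_{(\sigma_X\otimes_\Lambda S)\oplus(eS\to 0)}=\{Z:Z|_\Lambda\in D_{\sigma_X\oplus(e\Lambda\to 0)}\}$. The strategy is sound, but your final paragraph contains a real gap.

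You correctly flag that $\sigma_X\otimes_\Lambda S$ might not be the \emph{minimal} presentation of $X\otimes_\Lambda S$, but your claim that the ``discrepancy is invisible to the silting condition'' is not justified. Statement (1) is, via the paragraph just before Section~3, equivalent to $\Gen_S(X\otimes_\Lambda S)=D_{\sigma_{\min}\oplus(eS\to 0)}$ with $\sigma_{\min}$ the minimal presentation; if $\sigma_X\otimes_\Lambda S=\sigma_{\min}\oplus(P'\to 0)$ with $P'\notin\add(eS)$, then $D_{\sigma_{\min}\oplus(eS\to 0)}\supsetneq D_{(\sigma_X\otimes_\Lambda S)\oplus(eS\to 0)}$ and your chain of equivalences breaks in the direction $(1)\Rightarrow(2)$ (your counting argument only rescues $(2)\Rightarrow(1)$). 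The correct fix is that $\sigma_X\otimes_\Lambda S$ \emph{is} minimal, as the paper asserts: since $S/J(S)\cong\Lambda/J(\Lambda)$, the functor $-\otimes_\Lambda S$ preserves projective covers, and the only possible obstruction at the next step, namely $\Tor_1^\Lambda(X,N)$, lies inside $\Omega_\Lambda X\otimes_\Lambda N=(\Omega_\Lambda X\otimes_\Lambda S)\cdot N\subseteq(\Omega_\Lambda X\otimes_\Lambda S)J(S)$, hence in the radical; thus $P_1\otimes_\Lambda S\to\Omega_S(X\otimes_\Lambda S)$ is again a projective cover. Once you know this, your counting paragraph becomes superfluous and the argument goes through. (A smaller slip: the assertion that the $S$-top of an arbitrary $Z\in\mod S$ equals the $\Lambda$-top of $Z|_\Lambda$ is false in general---take $Z=S$. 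What is true, and what you actually need, is that for $W\in\mod\Lambda$ the $S$-top of $W\otimes_\Lambda S$ equals the $\Lambda$-top of $W$, since $(W\otimes_\Lambda S)\otimes_S S/J(S)\cong W\otimes_\Lambda\Lambda/J(\Lambda)$; the equality $|X\otimes_\Lambda S|=|X|$ then follows because $\End_S(X_i\otimes_\Lambda S)$ surjects onto the local ring $\End_\Lambda(X_i)$ with kernel $\Hom_\Lambda(X_i,X_i\otimes_\Lambda N)$, which is nilpotent by the nilpotency of $N$.)
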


\begin{proof}
Assume that $\sigma_X$ is a minimal projective presentation of $X$.
Then $\sigma_X\otimes T(\Lambda,M)$ is a minimal projective presentation of $X\otimes_{\Lambda}T(\Lambda,M)$.
Write $\sigma:=\sigma_X\otimes T(\Lambda,M)\oplus (eT(\Lambda,M)\to 0)$.
Note that there exists a natural projection from $T(\Lambda,M)$ to $\Lambda$.
It follows from \cite[Theorems 3.2]{AIR} and \cite[Theorem 2.2]{B}
that $(X\otimes_{\Lambda}T(\Lambda,M), eT(\Lambda,M))$ is a support $\tau$-tilting pair in $\mod T(\Lambda,M)$
if and only if $(\sigma_X\oplus (e\Lambda \to 0))\otimes T(\Lambda,M)$ is a 2-term silting complex,
and if and only if $\sigma_X\oplus (e\Lambda \to 0)$ is a 2-term silting complex and
$\Hom_{T(\Lambda,M)}(T(\Lambda,M)_{\Lambda}, X\otimes_{\Lambda}T(\Lambda,M))\in \Gen X=D_{\sigma_X\oplus (e\Lambda \to 0)}$.
Since
$$\Hom_{T(\Lambda,M)}(T(\Lambda,M)_{\Lambda}, X\otimes T(\Lambda,M))\cong
X\otimes T(\Lambda,M)_{\Lambda}\cong X\oplus X\otimes N,$$
we have that $\Hom_{T(\Lambda,M)}(T(\Lambda,M)_{\Lambda}, X\otimes_{\Lambda}T(\Lambda,M))\in \Gen X$ if and only if
$X\otimes_{\Lambda}N\in \Gen X$, that is, $X\otimes_\Lambda N\in  D_{\sigma_X}$ and
$X\otimes_\Lambda N\in D_{(e\Lambda \to 0)}$. Now the assertion follows from Lemma \ref{2.6}.
\end{proof}

We recall some notions from \cite{FL2012}.
A {\it pseudovalued graph} $(\mathcal{G},\mathcal{D})$ consists of
\begin{enumerate}
\item[(i)] a finite set $\mathcal{G} = \{1, 2, \cdots,n \}$ whose elements are called {\it vertices}; and
\item[(ii)] a correspondence taking any ordered pair $(i, j)\in \mathcal{G}\times \mathcal{G}$ to a non-negative
integer $d_{ij}$ such that if $d_{ij}\neq 0$ then $d_{ji}\neq 0$.
If $d_{ij}\neq 0$, then such a pair $(i, j)$ is called an {\it edge} between the vertices $i$ and $j$.
\end{enumerate}
The family $\mathcal{D}=\{(d_{ij},d_{ji})\mid(i, j)\in \mathcal{G}\times \mathcal{G}\}$ is called a
{\it valuation} of the graph $\mathcal{G}$. A {\it pseudovalued~quiver} is a pseudovalued graph
$(\mathcal{G},\mathcal{D})$ with an {\it orientation} which is given by prescribing for each edge
an ordering, indicated by an oriented edge. A {\it path} from $j$ to $i$ of the pseudovalued
quiver $(\mathcal{G},\mathcal{D})$ is a sequence $k_1(=j),k_2,\cdots, k_t(=i)$ of vertices
such that there is a valued oriented edge from $k_s$ to $k_{s+1} $ for any $s=1,2\cdots, t-1$.

For an algebra $\Lambda$ and a finitely generated left (resp. right) $\Lambda$-module $M$,
the {\it rank} $\rank_{\Lambda}M$ (resp. $\rank M_{\Lambda}$) of $M$ is defined as the minimal
cardinal number of the sets generators of $M$ as a left (resp. right) $\Lambda$-module.
A {\it $k$-pseudomodulation} $\mathcal{M}= (\Lambda_i,{_iM_j})$ of a pseudovalued graph
$(\mathcal{G},\mathcal{D})$ is defined as a set of $k$-algebras $\{\Lambda_i\}_{i\in \mathcal{G}}$,
together with a set $\{_iM_j \}_{(i, j )\in \mathcal{G}\times \mathcal{G}} $ of finitely generated
$(\Lambda_i,\Lambda_j)$-bimodules $_iM_j$ such that $\rank(_iM_j)_{\Lambda_j} =d_{ij}$ and
$\rank_{\Lambda_i}(_iM_j)=d_{ji}$. 

As a consequence of Theorem \ref{4.8}, we have the following result.

\begin{corollary}\label{4.9}
Let $\mathcal{M}=(\Lambda_i,{_iM_j})$ be a $k$-pseudomodulation of a pseudovalued quiver
$(\mathcal{G},\mathcal{D})$ and $V_i\in \mod \Lambda_i$ for any $i\in \mathcal{G}$,
and let $$\Lambda=\bigoplus_{i\in \mathcal{G}}\Lambda_i,\ \
M=\bigoplus_{(i,j)\in \mathcal{G}\times\mathcal{G}}\;{_iM_j},$$
$$X_i=\bigoplus_{j\in Q_i} V_j\otimes{_jM_{k_2}}\otimes\cdots\otimes{_{k_{s-1}}M_i},$$
where $Q_i=\{j\in \mathcal{G}\mid$ there is a path from $j$ to $i\}$
and $k_1(=j), k_2,\cdots, k_s(=i)$ is a path from $j$ to $i$.
Then the following statements are equivalent.
\begin{enumerate}
\item[(1)] $\bigoplus_{i\in\mathcal{G}}(V_i\oplus X_i)$ is a support $\tau$-tilting
$T(\Lambda,M)$-module.
\item[(2)] $\bigoplus_{i\in\mathcal{G}}V_i$ is a support $\tau$-tilting $\Lambda$-module and
$$\Hom_{\Lambda}(\bigoplus_{i\in\mathcal{G}}X_i,\tau (\bigoplus_{i\in\mathcal{G}}V_i))
=0=\Hom_{\Lambda}(e\Lambda,\bigoplus_{i\in \mathcal{G}}X_i),$$
where $e$ is an idempotent of $\Lambda$ such that $(\bigoplus_{i\in\mathcal{G}}V_i, e\Lambda)$
is a support $\tau$-tilting pair in $\mod \Lambda$.
\item[(3)] For any $i\in\mathcal{G}$, $V_i$ is a support $\tau$-tilting $\Lambda_i$-module and
$$\Hom_{\Lambda_i}(X_i,\tau V_i)=0=\Hom_{\Lambda_i}(e_i\Lambda_i, X_i),$$
where $e_i$ is an idempotent of $\Lambda_i$ such that $(V_i, e_i\Lambda_i)$ is a support
$\tau$-tilting pair in $\mod \Lambda_i$.
\end{enumerate}
\end{corollary}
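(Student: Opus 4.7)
The plan is to derive this corollary in two steps: first apply Theorem \ref{4.8} with the $k$-algebra $\Lambda=\bigoplus_{i\in\mathcal{G}}\Lambda_i$, the $(\Lambda,\Lambda)$-bimodule $M=\bigoplus_{(i,j)\in\mathcal{G}\times\mathcal{G}}{_iM_j}$ and the module $V:=\bigoplus_{i\in\mathcal{G}}V_i\in\mod\Lambda$ to obtain (1)$\Leftrightarrow$(2); then exploit the block decomposition $\Lambda=\prod_{i\in\mathcal{G}}\Lambda_i$ to refine the $\Lambda$-conditions of (2) into the vertex-wise conditions of (3). Note that any idempotent $e\in\Lambda$ decomposes as $e=\sum_i e_i$ with $e_i\in\Lambda_i$, so that $e\Lambda=\bigoplus_i e_i\Lambda_i$.

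The crucial combinatorial step is the identification $V\otimes_\Lambda T(\Lambda,M)\cong\bigoplus_{i\in\mathcal{G}}(V_i\oplus X_i)$, and in particular $V\otimes_\Lambda N\cong\bigoplus_{i\in\mathcal{G}}X_i$ for $N=M\oplus M^{\otimes 2}\oplus\cdots$. I would prove this by orthogonality of the blocks $\Lambda_i$: for any vertices $j\neq j'$ and any $k$, one has $V_j\otimes_\Lambda{_{j'}M_k}=0$, since $V_j$ is annihilated on the right by the central idempotent of $\Lambda$ corresponding to $\Lambda_{j'}$, while ${_{j'}M_k}$ is acted on by that same idempotent on the left. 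Consequently any non-zero summand of $V\otimes_\Lambda M^{\otimes s}$ has the form $V_{k_1}\otimes_{\Lambda_{k_1}}{_{k_1}M_{k_2}}\otimes\cdots\otimes{_{k_{s-1}}M_{k_s}}$ for some length-$s$ path $k_1\to k_2\to\cdots\to k_s$ in the pseudovalued quiver. Grouping by the terminal vertex $i=k_s$ and summing over $s\geq 1$ recovers $X_i$, while the $s=0$ term returns $V$ itself. With this identification in hand, Theorem \ref{4.8} applied to the pair $(V,e\Lambda)$ gives the equivalence (1)$\Leftrightarrow$(2).

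For (2)$\Leftrightarrow$(3) I would use the block decomposition $\mod\Lambda=\prod_{i\in\mathcal{G}}\mod\Lambda_i$: we have $\tau_\Lambda V=\bigoplus_i\tau_{\Lambda_i}V_i$, and $V$ is support $\tau$-tilting over $\Lambda$ with paired idempotent $e\Lambda$ if and only if each $(V_i,e_i\Lambda_i)$ is a support $\tau$-tilting pair in $\mod\Lambda_i$, using the additivities $|V|=\sum_i|V_i|$, $|e\Lambda|=\sum_i|e_i\Lambda_i|$ and $|\Lambda|=\sum_i|\Lambda_i|$. Moreover, since $X_i\in\mod\Lambda_i$ and $\tau V_j\in\mod\Lambda_j$, block orthogonality yields
\[\Hom_\Lambda\bigl(\bigoplus_{i\in\mathcal{G}}X_i,\tau V\bigr)=\bigoplus_{i\in\mathcal{G}}\Hom_{\Lambda_i}(X_i,\tau V_i),\quad \Hom_\Lambda\bigl(e\Lambda,\bigoplus_{i\in\mathcal{G}}X_i\bigr)=\bigoplus_{i\in\mathcal{G}}\Hom_{\Lambda_i}(e_i\Lambda_i,X_i),\]
so the two Hom-vanishings of (2) decouple vertex-by-vertex into those of (3).

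The main obstacle is the combinatorial identification of $V\otimes_\Lambda M^{\otimes s}$ with a direct sum indexed by length-$s$ paths: one must carefully track the interplay between $\otimes_\Lambda$ and the various $\otimes_{\Lambda_i}$ via block orthogonality. Once that is in place, both equivalences reduce to a formal packaging of Theorem \ref{4.8} together with the block decomposition of module categories.
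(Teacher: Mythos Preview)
Your proposal is correct and follows essentially the same approach as the paper: identify $V\otimes_\Lambda N\cong\bigoplus_i X_i$ (hence $V\otimes_\Lambda T(\Lambda,M)\cong\bigoplus_i(V_i\oplus X_i)$) and invoke Theorem~\ref{4.8} for (1)$\Leftrightarrow$(2), then use the block decomposition for (2)$\Leftrightarrow$(3). The paper simply declares (2)$\Leftrightarrow$(3) to be ``clear'' and asserts the tensor identification without the orthogonality argument you spell out, so your version is a more detailed rendering of the same proof.
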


\begin{proof}
The assertion $(2)\Leftrightarrow (3)$ is clear.

Set $X:=\bigoplus _{i\in \mathcal{G}}V_i$ and
$N:=M\oplus{_{\Lambda} M^2}\oplus\cdots\oplus{_{\Lambda}M^n}\oplus\cdots$. Then
we have $X\otimes _{\Lambda}N\cong \bigoplus_{i\in \mathcal{G}}{X_i}$, and hence
$$X\otimes _{\Lambda}T(\Lambda,M)\cong X\oplus (X\otimes _{\Lambda}N)\cong X\oplus
(\bigoplus_{i\in \mathcal{G}}{X_i})\cong \bigoplus_{i\in  \mathcal{G}}(V_i\oplus X_i).$$
Now the assertion $(1)\Leftrightarrow (2)$ follows from Theorem \ref{4.8}.
\end{proof}

\begin{remark}\label{4.10}
{\rm Let $\mathcal{M}$ be a $k$-pseudomodulation of a pseudovalued quiver $(\mathcal{G},\mathcal{D})$.
Recall from \cite{FL2012} that a {\it representation} of $\mathcal{M}$
is an object $(V_i,{_j\varphi_i})$, where to each vertex $i\in \mathcal{G}$ corresponds a $\Lambda_i$-module
$V_i$ and to each oriented edge $i\to j$ corresponds a $\Lambda_j$-homomorphism
$_j\varphi_ i: V_i \otimes_{\Lambda_i}{_iM_j}\to V_j$. If each $V_i$ is finitely generated as
$\Lambda_i$-module, then the representation $(V_i,{_j\varphi_i})$ is called {\it finitely generated}.
We use $\rep(\mathcal{M})$ to denote the category consisting of all finitely generated representations
of $\mathcal{M}$. It was shown in \cite[Theorem 3.2]{FL2012} that $\rep(\mathcal{M})$ is equivalent to
$\mod T(\Lambda,M)$, where $\Lambda$ and $M$ are as in Corollary \ref{4.9}.}
\end{remark}

\section{An example}

In this section, for a finite dimensional $k$-algebra over an algebraically closed field $k$ with the quiver $Q$,
we use $P_i$ (resp. $S_i$) to denote the indecomposable projective (resp. simple) module corresponding
to the vertex $i$ in $Q$, and use $e_i$ to denote the idempotent corresponding
to the vertex $i$. For brevity, the symbol $\oplus$ between modules will be omitted; for example, for modules $M$ and $N$,
we will replace $M\oplus N$ with $MN$. We illustrate some of our work with the following example.

\begin{example}
{\rm Let $R$ be a finite dimensional $k$-algebra over $k$ given by the following quiver
$$\xymatrix@C=15pt{&2&&\\
1\ar[ru]^{\delta}&&4\ar[lu]_{\gamma}\ar[r]^{\beta}&5\\
&3\ar[lu]^{\varepsilon}\ar[ru]_{\alpha}&&}$$
with the relation $\alpha\gamma=\varepsilon\delta$ and $\alpha\beta=0$. Let $e=e_1+e_2$. Then
$$R=\left(\begin{array}{cc}eRe & 0 \\
(1-e)Re & (1-e)R(1-e)\end{array}\right).$$
Take $$\Lambda:=eRe(\cong k(\xymatrix@C=15pt{1\ar[r]^{\delta}&2})),$$
$$\Gamma:=(1-e)R(1-e)(\cong k(\xymatrix@C=15pt{3\ar[r]^{\alpha}&4\ar[r]^{\beta}&5})\ \text{with}\
\alpha\beta=0),$$
$${_\Gamma M_\Lambda=(1-e)Re}.$$
Then $M_\Lambda\cong \Lambda$ and $_\Gamma M\cong P_3S_4$.

For an indecomposable $\Lambda$-module $X$ and an indecomposable $\Gamma$-module $Y$,
we use $(X, Y)$ to denote the corresponding indecomposable $R$-module.
The  Auslander-Reiten quiver of $R$ is as follows.
$$\xymatrix@C=10pt{(0, P_5)\ar[rd]&&(P_2,S_4)\ar[rd]&&(S_1, 0)\ar[rd]&&(0,P_3)\ar[rd]&\\
&(P_2, P_4)\ar[ru]\ar[rd]&&(P_1, S_4)\ar[ru]\ar[rd]\ar[r]&(P_1, P_3)\ar[r]&(S_1, P_3)\ar[ru]\ar[rd]&&{(0, S_3)}.\\
(P_2, 0)\ar[ru]\ar[rd]&&(P_1, P_4)\ar[ru]\ar[rd]&&(0,S_4)\ar[ru]&&{(S_1, S_3)}\ar[ru]&\\
&(P_1, 0)\ar[ru]&&(0, P_4)\ar[ru]&&&&}$$

The Hasse quivers of $\Lambda$ and $\Gamma$ are as follows.
$$\xymatrix@C=15pt{{\smallmatrix Q(s\tau{\text -}\tilt \Lambda):
\endsmallmatrix} &(P_2,P_1)\ar[r]&(0, P_1P_2)\\
 (P_1P_2,0)\ar[r]\ar[ru]&(P_2S_1,0)\ar[r]&(S_1,P_2),\ar[u]
}$$
\begin{center}
$\xymatrix@C=30pt{{\smallmatrix Q(s\tau{\text -}\tilt \Gamma):
\endsmallmatrix}&P_3P_4S_4\ar[r]\ar[rrd]&P_3S_4\ar[r]\ar[rrd]&P_3S_3\ar[r]&S_3\ar[rd]&\\
P_3P_4P_5\ar[r]\ar[ru]\ar[rd]&P_3S_3P_5\ar[r]\ar[rru]&S_3P_5\ar[rru]\ar[rrd]&P_4S_4\ar[r]&S_4\ar[r]&0\\
&P_4P_5\ar[rrr]\ar[rru]&&&P_5.\ar[ru]&
}$
\end{center}
Now we list $T\otimes_\Gamma M$ for all support $\tau$-tilting $\Gamma$-modules $T_\Gamma$ in the following table.
\begin{center}
\begin{tabularx}{\textwidth}{|X|c|c|c|c|c|c|c|c|c|c|c|c|}
\hline
$T_i$&$P_3P_4P_5$&  $P_3P_4S_4$  &$P_3S_3P_5$&  $P_4P_5$ &$P_3S_4$&$S_3P_5$ &$P_3S_3$&  $P_4S_4$  &$S_3$&$S_4$&$P_5$&0\\ \hline
$T_i\otimes M$&$P_1P_2$&  $P_1P_2P_2$  &$P_1S_1$&  $P_2$&$P_1P_2$&$S_1$&$P_1S_1$&  $P_2P_2$ &$S_1$&$P_2$&0&0\\ \hline
\end{tabularx}
\end{center}

By Remark \ref{4.4}(1), we have that the maximal idempotent $e$ in Theorem \ref{4.3}
is exactly the idempotent such that $(X, e\Lambda)$ is a support $\tau$-tilting pair in $\mod\Lambda$.
We can construct many support $\tau$-tilting $R$-modules by Theorem \ref{4.3}.

(1) Considering the support $\tau$-tilting pair $(P_1P_2,0)$ over $\Lambda$, we have $\tau (P_1P_2)=0$ and $e\Lambda=0$.
Hence, we get the following support $\tau$-tilting $R$-modules.
\begin{center}
$(P_1,0)$$(P_2,0)$$(P_1, P_3)$$(P_2, P_4)$$(0, P_5)$, $(P_1,0)$$(P_2,0)$$(P_1,P_3)$$(P_2,P_4)$$(P_2, S_4)$,
$(P_1,0)$$(P_2,0)$$(P_1, P_3)$$(S_1, S_3)$$(0, P_5)$, $(P_1,0)$$(P_2,0)$$(P_2, P_4)$$(0,P_5)$,\\
$(P_1,0)$$(P_2 ,0)$$(P_1, P_3)$$(P_2, S_4)$, $(P_1,0)$$(P_2,0)$$(S_1, S_3)$$(0, P_5)$,\\
$(P_1,0)$$(P_2,0)$$(P_1, P_3)$$(S_1, S_3)$,$(P_1,0)$$(P_2,0)$$(P_2, P_40$$(P_2, S_4)$,\\
$(P_1,0)$$(P_2,0)$$(S_1, S_3)$, $(P_1,0)$$(P_2,0)$$(P_2, S_4)$, \\
$(P_1,0)$$(P_2,0)$$(0, P_5)$, $(P_1,0)$$(P_2,0)$.
\end{center}

(2) Considering the support $\tau$-tilting pair $(P_1S_1, 0)$ over $\Lambda$, we have $\tau (P_1S_1)=P_2$ and $e\Lambda=0$.
Hence, those $R$-modules
\begin{center}
$(P_1,0)$$(S_1,0)$$(P_1, P_3)$$(S_1, S_3)$$(0, P_5)$, $(P_1,0)$$(S_1,0)$$(S_1,S_3)$$(0, P_5)$, \\
$(P_1,0)$$(S_1,0)$$(P_1,P_3)$$(S_1, S_3)$, $(P_1,0)$$(S_1,0)$$(S_1,S_3)$,\\
$(P_1,0)$$(S_1,0)$$(0, P_5)$, $(P_1,0)$$(S_1,0)$
\end{center}
are support $\tau$-tilting.

(3) Considering the support $\tau$-tilting pair $(P_2, P_1)$ over $\Lambda$, we have $\tau P_2=0$ and $e\Lambda=P_1$. Hence,
we get the following support $\tau$-tilting $R$-modules.
\begin{center}
$(P_2,0)$$(P_2,P_4)$$(0, P_5)$, $(P_2,0)$$(P_2,P_4)$$(P_2,S_4)$,
$(P_2,0)$$(P_2, S_4)$, $(P_2,0)$$(0, P_5)$, $(P_2,0)$.
\end{center}

(4) Considering the support $\tau$-tilting pair $(S_1, P_2)$ over $\Lambda$, we have $\tau S_1=P_2$ and $e\Lambda=P_2$.
Hence, we get the following support $\tau$-tilting $R$-modules.
\begin{center}
$(S_1,0)$$(S_1, S_3)$$(0, P_5)$, $(S_1,0)$$(S_1, S_3)$, $(S_1,0)$$(0, P_5)$, $(S_1,0)$.
\end{center}

(5) Considering the support $\tau$-tilting pair $(0, P_1P_2)$ over $\Lambda$, we have  $e\Lambda=P_1P_2$.
Hence, we get  support $\tau$-tilting $R$-modules: $(0,P_5)$ and $0$.

(6) Considering the tilting $\Gamma$-module $P_3P_4S_4$ which is a silting module with respect to
$$\sigma:P_5\rightarrow P_3P_4P_4,$$
we have that $\sigma\otimes_\Gamma M:0\rightarrow P_1P_2P_2$ is monic.
Thus $(P_1,0)$$(P_2,0)$$(P_1, P_3)$$(P_2, P_4)$$(P_2, S_4)$ is a tilting $R$-module by Corollary \ref{3.7},
though $_\Gamma M\cong P_3S_4$ is not flat.

(7) Unfortunately, we can not get all support $\tau$-tilting $R$-modules by Theorem \ref{4.3}.
For example, the module $(0, P_3)$$(P_2,0)$$(P_1, P_3)$$(P_2, P_4)$$(0, P_5)$ is a support $\tau$-tilting $R$-module,
but it does not appear in (1)--(5).}
\end{example}


\begin{thebibliography}{17}
\bibitem{AIR} T. Adachi, O. Iyama and I. Reiten, {\it $\tau$-tilting theory},
Compos. Math. {\bf 150} (2014), no. 3, 415--452.

\bibitem{AH} L. Angeleri H\"ugel and H. Michal, {\it Silting modules over commutative rings},
Int. Math. Res. Not. IMRN, {\bf 2017} (2017),  no. 13, 4131--4151.

\bibitem{AMV} L. Angeleri H\"ugel, F. Marks and J. Vit\'oria, {\it Silting modules},
Int. Math. Res. Not. IMRN, {\bf 2016} (2016),  no. 4, 1251--1284.

\bibitem{AMV1} L. Angeleri H\"ugel, F. Marks and J. Vit\'oria,
{\it Silting modules and ring epimorphisms}, Adv. Math. {\bf 303} (2016), 1044--1076.



\bibitem{AHT} I. Assem, D. Happel and S. Trepode, {\it Extending tilting modules to one-point extensions by projectives},
Comm. Algebra  {\bf 35} (2007),  no. 10, 2983--3006.

\bibitem{AN} I. Assem and N. Marmaridis, {\it Tilting modules over split-by-nilpotent extensions}, Comm. algebra {\bf 26} (1998),  no. 5, 1547--1555.

\bibitem{ASS} I. Assem, D. Simson and  A. Skowro\'nski, Elements of the Representation Theory of Associative Algebras, Vol. 1,
Techniques of Representation Theory, London Math. Soc. Stud. Texts {\bf 65}, Cambridge Univ. Press, Cambridge, 2006.


\bibitem{ARS} M. Auslander, I. Reiten and and S. O. Smal$\o$, Representation Theory of Artin Algebras,
Corrected reprint of the 1995 original, Cambridge Stud. in Adv. Math. {\bf 36}, Cambridge Univ. Press,
Cambridge, 1997.

\bibitem{B} S. Breaz, {\it The ascent-descent property for 2-term silting complexes}, arXiv:1905.02885.

\bibitem{BF} S. Breaz and P. Flaviu, {\it Cosilting modules}, Algebr. Represent. Theory {\bf 20} (2017), no. 5, 1305--1321.

\bibitem{BZ} S. Breaz and J. $\check{{\rm Z}}$emli$\check{\rm c}$ka, {\it Torsion classes generated by silting modules},
Ark. Mat. {\bf 56} (2018), no. 1, 15--52.

\bibitem{CGR} Q. Chen, M. Gong and W. Rump, {\it Tilting and trivial extensions}, Arch. Math. {\bf 93} (2009), no. 6, 531--540.

\bibitem{CT} R. Colpi and J. Trlifaj, {\it Tilting modules and tilting torsion theories}, J. Algebra
{\bf 178} (1995), no. 2,  614--634.

\bibitem{G} E. L. Green, {\it On the representation theory of rings in matrix form},
Pacific. J. Math. {\bf 100} (1982),  no. 1, 123--138.

\bibitem{HU} D. Happel and L. Unger, {\it On a partial order of tilting modules},
Algebr. Represent. Theory {\bf 8} (2005), no, 2,  147--156.

\bibitem{FL2012} F. Li, {\it Modulation and natural valued quiver of an algebra},
Pacific J. Math. {\bf 256} (2012), no. 1, 105--128.

\bibitem{YM1986} Y. Miyashita, {\it Tilting modules of finite projective dimension},  Math. Z. {\bf 193} (1986),  no, 1, 113--146.


\bibitem{SS2007}  D. Simson and  A. Skowro\'nski, Elements of the Representation Theory of Associative Algebras, Vol. 3,
Representation-Infinite Tilted Algebras, London Math. Soc. Stud. Texts {\bf 72}, Cambridge Univ. Press, Cambridge, 2007.


\bibitem{S} P. Suarez, {\it $\tau$-Tilting modules over one-point extensions by a projective module},
Algebr. Represent. Theory {\bf 21} (2018),  no. 4, 769--786.
\end{thebibliography}
\end{document}